\newtheorem{pr}{Proposition}
\newtheorem{lemma}{Lemma}
\newtheorem{de}{Definition}
\newtheorem{teo}{Theorem}
\newtheorem{remark}{Remark}
\newfont{\hueca}{msbm10}
\begin{document}
\title[Leibniz algebras associated with representations of euclidean Lie algebra]
{Leibniz algebras associated with representations of euclidean Lie
algebra}

\author{J.Q. Adashev, B.A. Omirov, S. Uguz}

\address{[J.Q. Adashev ] Institute of Mathematics. National
University of Uzbekistan, Dormon yoli str. 29, 100125, Tashkent
(Uzbekistan)}
\email{adashevjq@mail.ru}

\address{Selman Uguz, Department of Mathematics,
Arts and Sciences Faculty Harran University, 63120 \c{S}anliurfa,
Turkey} \email{{\tt selmanuguz@gmail.com}}

\address{[B.A. Omirov ] National University of Uzbekistan, University str. 4, 100174, Tashkent
(Uzbekistan)}
\email{omirovb@mail.ru}

\maketitle
\begin{abstract}
In the present paper we describe Leibniz algebras with three-dimensional Euclidean Lie algebra
$\mathfrak{e}(2)$ as its liezation. Moreover, it is assumed that the ideal generated by the squares of elements of an algebra (denoted by $I$) as a right $\mathfrak{e}(2)$-module is associated to representations of $\mathfrak{e}(2)$ in $\mathfrak{sl}_2({\mathbb{C}})\oplus \mathfrak{sl}_2({\mathbb{C}}), \mathfrak{sl}_3({\mathbb{C}})$
and $\mathfrak{sp}_4(\mathbb{C})$. Furthermore, we present the classification of Leibniz algebras with general Euclidean Lie algebra ${\mathfrak{e(n)}}$ as its liezation $I$ being an $(n+1)$-dimensional right ${\mathfrak{e(n)}}$-module defined by transformations of matrix realization of $\mathfrak{e(n)}.$ Finally, we extend the notion of a Fock module over Heisenberg Lie algebra to the case of Diamond Lie algebra $\mathfrak{D}_k$ and describe the structure of Leibniz algebras with corresponding Lie algebra $\mathfrak{D}_k$ and with the ideal $I$ considered as a Fock $\mathfrak{D}_k$-module.

\end{abstract}

\medskip \textbf{AMS Subject Classifications (2010):
17A32, 17B10, 17B30.}

\textbf{Key words:} Leibniz algebra, Euclidean Lie algebra, Diamond Lie algebra, representation of Euclidean Lie algebra, Fock module.

\section{Introduction}

Leibniz algebras are non skew-symmetric generalization of Lie algebras, in the sense that, adding antisymmetry to Leibniz bracket leads to coincidence of the fundamental identity (Leibniz  identity) with Jacobi identity. Therefore, Lie algebra is a particular case of Leibniz algebra. Leibniz algebras were introduced by J.-L. Loday \cite{Loday} in 1993 and since then the study of Leibniz algebras has been carried
on intensively. Investigation of Leibniz algebras shows that many classical results from theory of Lie algebras are extended to Leibniz algebras case (see \cite{Alb}, \cite{Balavoine}, \cite{Bar1}, \cite{Bar2}, \cite{Nilrad1}, \cite{Nilrad2}, \cite{Gorbat}, \cite{Omir} and reference therein).

For a Leibniz algebra $L$ we consider the natural homomorphism $\varphi$ into the quotient Lie algebra $\overline{L}=L/I$, which is called its {\it corresponding Lie algebra} to Leibniz algebra $L$ (in some papers it is called a {\it liezation} of $L$). The map $I \times \overline{L} \to I$, $(i,\overline{x}) \mapsto [i,x]$
endows $I$ with a structure of a right $\overline{L}$-module (it is well-defined due to $I$ being in a right annihilator).

Denote by $Q(L) = \overline{L} \oplus I,$ then the operation $(-,-)$ defines a Leibniz algebra structure on $Q(L),$ where $$(\overline{x},\overline{y}) = \overline{[x,y]}, \quad (\overline{x},i) = 0, \quad (i, \overline{x}) = [i,x], \quad (i,j) = 0, \qquad x, y \in L, \ i,j \in I.$$

Therefore, for a given Lie algebra $G$ and a right $G$-module $M,$ we can construct a Leibniz algebra as described above.

One of the approaches related to this construction is the description of Leibniz algebras with corresponding Lie algebra being a given Lie algebra. In papers \cite{Filiform}, \cite{Heisenberg} some Leibniz algebras with their corresponding Lie algebras being filiform and Heisenberg $H_n$ Lie algebras, respectively,  are described. In particular, the classification theorems for Leibniz algebras whose corresponding Lie algebras are Heisenberg in one case and naturally graded filiform algebras in another with the ideal $I$ being isomorphic to Fock module over liezation are obtained in \cite{Filiform}.

In this paper we focus our attention to Leibniz algebras constructed by Euclidean Lie algebra $\mathfrak{e(n)}$ and some of its modules. In the case $n=2$ we use modules considered in the paper \cite{dougnew}, while for Euclidean Lie algebra $\mathfrak{e(n)}$ with $n\geq 3$ we use its modules that arise from matrix realization of $\mathfrak{e(n)}.$ In addition, we clarify the structure of Leibniz algebras $Q(\mathfrak{D}_k) = \overline{\mathfrak{D}_k} \oplus I,$ where $I$ is a Fock module over Diamond Lie algebra $\mathfrak{D}_k.$ For detailed information on Diamond Lie algebra $\mathfrak{D}_k$ and its properties we refer readers to the papers \cite{Avit}, \cite{Cas}, \cite{Ludwig}.

Throughout the paper (if it is not mentioned ) we consider the base field to be $\mathbb{C}$ and in the multiplication table of an algebra omitted products are assumed to be zero.

\section{Preliminaries}

In this section we give necessary definitions and preliminary results.

\begin{de} \cite{Loday} An algebra $(L,[-,-])$ over a field  $\mathbb{F}$   is called a Leibniz algebra if for any $x,y,z\in L$ the so-called Leibniz identity
\[ \big[[x,y],z\big]=\big[[x,z],y\big]+\big[x,[y,z]\big] \] holds.
\end{de}

Let ${L}$ be a Leibniz algebra. The ideal $I$ generated by $\{[x,x]: x\in {L}\}$  plays an important role in the theory since it determines the (possible) non-Lie character of ${L}$. From the Leibniz identity, this ideal satisfies $[L,I]=0.$

\subsection{Euclidean Lie algebra and its matrix realization.}

The group $E(n)$ of Euclidean motions in the $\mathbb{R}^n$ is the noncompact semidirect product group $\mathbb{R}^n\rtimes SO(n)$. The complexification of its Lie algebra $\mathfrak{e}(n)$ admits a basis $\{E_{i,j},  \ H_{k} \mid i<j\}$ with non-zero commutation relations given by
$$[E_{i,j},E_{j,k}]=E_{i,k}, \quad [E_{i,j},H_{j}]=H_{i}, \quad [E_{i,j},H_{i}]=-H_{j},$$
assuming $E_{i,j}=-E_{j,i}$.

In fact, the matrix realization of Euclidean Lie algebra $\mathfrak{e(n)}$ can be implemented by the following matrix form:
$$\left(\begin{array}{cccccc}
&&&x_1\\
&A&&\vdots \\
&&&x_{n}\\
0&0&\dots&0\\
\end{array}\right),$$
where the matrix $A$ is self-conjugate matrix \cite{dougnew1}. In this realization $E_{i,j}=e_{i,j}-e_{j,i}, \ 1\leq i \neq j\leq n, \ H_{k}=e_{k,n+1}, \ 1\leq k\leq n$ with the matrix units $e_{i,j}$.

We preserve the usual notations $\{ p_+, p_-, l \}$ for the basis of Lie algebra $\mathfrak{e}(2)$, where $p_+=E_{1,3}, \ p_-=E_{2,3}$ and $l=E_{1,2}$.

\subsection{Diamond Lie algebra}

There is a well-know relation $\mathcal{D}_1/Center(\mathcal{D}_1) \cong \mathfrak{e}(2)$ between four-dimensional Diamond Lie algebra $\mathcal{D}_1$ and $\mathfrak{e}(2)$.

Let us consider a $(2k+2)$-dimensional real Diamond Lie algebra with a basis $\{X_i, Y_i, Z, T \ | \ 1\leq i \leq k\}$ and the table of multiplication:
$$[X_i,Y_i]=Z, \ [T,X_i]=-X_i, \ [T,Y_i]=Y_i, \ 1\leq i \leq k.$$

Take the basis transformation (complexification):

$$Z^\prime=-\frac{i}{2}Z, \ X_i^\prime=\frac{1}{2}(X_i+Y_i), \ Y_i^\prime=\frac{i}{2}(X_i-Y_i), \ T^\prime=-iT, \ 1\leq i \leq k,$$
and obtain complex Diamond Lie algebra $\mathcal{D}_k$ with the table of multiplication:
\begin{equation}\label{Diamond1}
[X_i,Y_i]=Z, \ [T,X_i]=Y_i, \ [T,Y_i]=-X_i, \ 1\leq i \leq k.
\end{equation}

In fact, Diamond Lie algebra is a double one-dimensional central extension of an abelian algebra, while Heisenberg Lie algebra is a one-dimensional extension of an abelian algebra.

\subsection{Fock module over Heisenberg Lie algebra.}

Recall, that a Heisenberg Lie algebra $H_k$ is defined by the following table of multiplications
$$[x_i,\frac{\delta}{\delta x_i}]=1, \  1\leq i \leq k,$$
in the basis $\{1, \ x_i, \ \frac{\delta}{\delta x_i}, \ 1\leq i \leq k\}.$

In the paper \cite{HeisenbergLie} the notion of a Fock module over
Heisenberg Lie algebra is introduced. Namely, it is
$\mathbb{C}[x_1,\dots,x_k]$ equipped with the following
$H_k$-module structure:
\begin{equation}\label{eqfock}
\begin{array}{lll}
(p(x_1,\dots,x_k),1)&\mapsto & p(x_1,\dots,x_k),\\
(p(x_1,\dots,x_k),x_i)&\mapsto & x_ip(x_1,\dots,x_k),\\
(p(x_1,\dots,x_k),\frac{\delta}{\delta x_i})&\mapsto &\frac{\delta}{\delta x_i}(p(x_1,\dots,x_k)),\\
\end{array}
\end{equation}
for any $p(x_1,\dots,x_k)\in \mathbb{C}[x_1,\dots,x_k]$ and $1 \leq i \leq k$.

\subsection{$\bf{\mathfrak{sl}_2(\mathbb{C})\oplus \mathfrak{sl}_2(\mathbb{C})}$-modules as $\mathfrak{e}(2)$-modules}

The special linear algebra $\mathfrak{sl}_2(\mathbb{C})$ is simple Lie algebra of traceless $2\times 2$ matrices with complex entries.
The semi-simple Lie algebra $\mathfrak{sl}_2(\mathbb{C})\oplus \mathfrak{sl}_2(\mathbb{C})$ is
of type $A_1 \times A_1$ and admits a Chevalley basis $\{x_i ,y_i
,h_j \mid 1\leq i \leq 2, 1\leq j \leq2\} $ defined as follows:

\begin{equation} ah_1+bh_2+cx_1 + dx_2+ c' y_1 + d' y_2= \left(%
\begin{array}{cccc}
  a & c & 0 & 0 \\
  c' & -a & 0 & 0 \\
  0 & 0 & b & d \\
  0 & 0 & d' & -b \\
\end{array}%
\right).
\end{equation}

Dougles, Repka and Joseph \cite{dougnew} construct
classification of embeddings of  $\mathfrak{e}(2)$   into
$\mathfrak{sl}_2(\mathbb{C}) \oplus \mathfrak{sl}_2(\mathbb{C})$ given in the next theorem.

\begin{teo}\label{teo1}
There are precisely two embeddings of $\mathfrak{e}(2)$   into
$\mathfrak{sl}_2(\mathbb{C}) \oplus
\mathfrak{sl}_2(\mathbb{C})$, up to an inner automorphism. They are
given by
$$\phi_1: \quad \quad p_+ \mapsto x_1, \quad p_- \mapsto  x_2, \quad l \mapsto   \frac{1}{2}(h_1-h_2),$$
$$\phi_2: \quad \quad p_+ \mapsto x_2, \quad p_- \mapsto x_1, \quad l \mapsto \frac{1}{2}(h_2-h_1).$$
\end{teo}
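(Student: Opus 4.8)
The plan is to classify injective Lie homomorphisms $\phi\colon \mathfrak{e}(2)\to \mathfrak{g}:=\mathfrak{sl}_2(\mathbb{C})\oplus \mathfrak{sl}_2(\mathbb{C})$ by projecting onto the two simple summands. In the complexified eigenbasis used in the statement the structure relations read $[l,p_+]=p_+$, $[l,p_-]=-p_-$, $[p_+,p_-]=0$, so that $p_\pm$ are the $\pm1$-eigenvectors of $\ad l$, they span an abelian ideal $\mathfrak{t}$, and the only ideals of $\mathfrak{e}(2)$ are $0,\ \mathbb{C}p_+,\ \mathbb{C}p_-,\ \mathfrak{t},\ \mathfrak{e}(2)$. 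Writing $\phi=(\rho_1,\rho_2)$ where $\rho_j$ is the composite with the $j$-th projection, each $\rho_j\colon\mathfrak{e}(2)\to\mathfrak{sl}_2(\mathbb{C})$ is a homomorphism and $\ker\phi=\ker\rho_1\cap\ker\rho_2$, so $\phi$ is an embedding exactly when these two kernels meet only in $0$. Since $[\mathfrak{sl}_2(\mathbb{C}),\mathfrak{sl}_2(\mathbb{C})]=0$ across the two summands, every inner automorphism of $\mathfrak{g}$ acts componentwise by $\mathrm{SL}_2$-conjugation; hence classifying $\phi$ up to inner automorphism reduces to classifying each $\rho_j$ up to conjugacy and then assembling compatible pairs.

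The core step is thus to classify, up to conjugacy, a single homomorphism $\rho\colon\mathfrak{e}(2)\to\mathfrak{sl}_2(\mathbb{C})$. Setting $L=\rho(l)$, $P=\rho(p_+)$, $Q=\rho(p_-)$, the relations become $[L,P]=P$, $[L,Q]=-Q$, $[P,Q]=0$; that is, $P$ and $Q$ are $\ad L$-eigenvectors for the eigenvalues $+1$ and $-1$, and they commute.

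The hard part -- and what I expect to be the main obstacle -- is the claim that $P$ and $Q$ cannot both be nonzero. Here I would use that every element of $\mathfrak{sl}_2(\mathbb{C})$ is either semisimple or nilpotent, since a traceless $2\times 2$ matrix with a repeated eigenvalue has that eigenvalue equal to $0$. If $L$ is nilpotent and nonzero, then $\ad L$ has no nonzero eigenvalue and $P=Q=0$; the same holds if $L=0$. So if $P$ or $Q$ is nonzero, $L$ is semisimple, and after conjugation $L=\tfrac{c}{2}h$ with $\ad L$-eigenvalues $0,c,-c$; matching the eigenvalue $+1$ (resp. $-1$) forces $c=\pm1$ and pins $P$ into $\mathbb{C}x$ and $Q$ into $\mathbb{C}y$. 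But then $[P,Q]\in\mathbb{C}[x,y]=\mathbb{C}h$ is nonzero unless one of $P,Q$ vanishes, contradicting $[P,Q]=0$. This is precisely where the one-dimensionality of the root spaces of $\mathfrak{sl}_2$ is decisive. Consequently the nonzero classes are two types: Type~I with $\rho(l)=\tfrac12 h,\ \rho(p_+)=x,\ \rho(p_-)=0$ (kernel $\mathbb{C}p_-$) and Type~II with $\rho(l)=-\tfrac12 h,\ \rho(p_-)=x,\ \rho(p_+)=0$ (kernel $\mathbb{C}p_+$); a short computation with $\exp(t\,\ad x)$ strips the residual nilpotent part of $L$, and Types~I and~II are not conjugate because conjugation cannot carry the nonzero vector $\rho(p_+)$ to $0$. (There is also the degenerate class with $P=Q=0$, whose kernel contains all of $\mathfrak{t}$.)

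Finally I would assemble the pairs. No single $\rho$ is injective, since its image has dimension at most $2$ and its kernel is a nonzero subspace of $\mathfrak{t}$; any pair with a component killing all of $\mathfrak{t}$, or with both components sharing the same kernel line, fails $\ker\rho_1\cap\ker\rho_2=0$. Injectivity therefore forces $\{\ker\rho_1,\ker\rho_2\}=\{\mathbb{C}p_+,\mathbb{C}p_-\}$, leaving exactly the two assemblies $(\rho_1,\rho_2)=(\mathrm{I},\mathrm{II})$ and $(\mathrm{II},\mathrm{I})$. After the componentwise normalization these are precisely $\phi_1$ and $\phi_2$, and they are inequivalent under inner automorphisms because such automorphisms preserve each summand, whereas $\phi_1$ and $\phi_2$ differ in which summand receives $\phi(p_+)$. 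This yields exactly two embeddings.
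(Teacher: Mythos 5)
The paper does not prove this statement: it is quoted verbatim from Douglas--Repka--Joseph \cite{dougnew} and used as an imported classification, so there is no internal proof to compare yours against. Judged on its own, your argument is correct and self-contained. The reduction to componentwise classification is legitimate because inner automorphisms of $\mathfrak{sl}_2(\mathbb{C})\oplus\mathfrak{sl}_2(\mathbb{C})$ preserve each simple summand, and your key step is the right one: in a single $\mathfrak{sl}_2(\mathbb{C})$, a nonzero eigenvalue of $\ad L$ forces $L$ semisimple, the $\pm1$ eigenvalues pin $P$ and $Q$ into the two opposite root lines, and $[\mathbb{C}x,\mathbb{C}y]=\mathbb{C}h$ then kills the possibility that both are nonzero, so each component homomorphism has kernel containing $\mathbb{C}p_+$, $\mathbb{C}p_-$, or all of $\mathfrak{t}$, and injectivity of the pair forces one component of each type. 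Two minor points worth making explicit if you write this up: the normalization $P=x$ (rather than a nonzero multiple) uses conjugation by the diagonal torus, and the inequivalence of $\phi_1$ and $\phi_2$ rests on the fact that the swap of the two summands is an \emph{outer} automorphism --- under the full automorphism group the two embeddings would coincide, which is consistent with the theorem being stated only up to inner automorphisms.
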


\begin{remark} \label{remark1} The basis elements $\{p_+,p_-, l \}$ of the algebra $\mathfrak{e}(2)$ by faithful representations are identified  with the linear transformations $\{\phi_1(p_+),\phi_1(p_-),\phi_1(l)\}$ of a linear space $V=\{X_1, X_2, X_3, X_4\}$.
We define on a space $V$ the structure $\mathfrak{e}(2)$-module by the action, which is naturally arises from transformations $\{\phi_1(p_+),\phi_1(p_-),\phi_1(l)\}$:

\begin{equation} \label{eq111}
\left\{\begin{array}{ll}
(X_1,p_+)= X_2,    & (X_3,p_-)= X_4,      \\[1mm]
(X_1,l)=\frac{1}{2}X_1,  & (X_2,l)=-\frac{1}{2}X_2,\\[1mm]
(X_3,l)=-\frac{1}{2}X_3,  & (X_4,l)= \frac{1}{2}X_4.\\[1mm]
\end{array}\right. \end{equation}
Note that the remaining products in the action are zero.
\end{remark}

Since $\phi_2=\varepsilon \circ \phi_1$, where $\varepsilon :  \{x_1, x_2, h_1-h_2\}  \rightarrow \{x_1, x_2, h_1-h_2\}$ with $\varepsilon(x_1)=x_2,  \varepsilon(x_2)=x_1,  \varepsilon(h_1-h_2)=h_2-h_1$ the constructed via representation $\phi_2$ module over $\mathfrak{e}(2)$ is equivalent to (\ref{eq111}).

\subsection{$\mathfrak{sl}_3(\mathbb{C})$-modules as $\mathfrak{e}(2)$-modules}

The special linear algebra $\mathfrak{sl}_3(\mathbb{C})$ is the Lie algebra
of traceless $3 \times 3$ matrices with complex entries. It is a
simple Lie algebra of type $A_2$. A Chevalley basis $\{x_i, y_i,
h_j :1\leq i \leq 3, 1 \leq j \leq 2 \} $ of $\mathfrak{sl}_3(\mathbb{C})$ is defined
as follows:

$$ ah_1 +bh_2 + cx_1 + dx_2+ ex_3 + c'y_1 + d'y_2 + e' y_3=\left(%
\begin{array}{ccc}
  a & c & -e \\
  c' & b-a &  d \\
  -e' & d' & -b \\
\end{array}%
\right).$$

Dougles et.al. \cite{dougnew} give a classification
of embeddings of  $\mathfrak{e}(2)$   into
$\mathfrak{sl}_3(\mathbb{C}) $ presented in the next statement.

\begin{teo}\label{teo2}
 There are precisely two embeddings of $\mathfrak{e}(2)$   into
$\mathfrak{sl}_3(\mathbb{C})$, up to an inner automorphism. They
are given as
$$\varphi_1: \quad \quad p_+ \mapsto x_1, \quad p_- \mapsto x_3, \quad l \mapsto -h_2,$$
$$\varphi_2: \quad \quad p_+ \mapsto  x_2, \quad p_- \mapsto x_3, \quad l \mapsto -h_1.$$
\end{teo}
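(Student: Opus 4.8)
The plan is to regard an embedding as a triple of matrices and classify it by conjugation. Writing $L=\varphi(l)$, $P=\varphi(p_+)$, $Q=\varphi(p_-)$, a homomorphism $\varphi\colon\mathfrak{e}(2)\to\mathfrak{sl}_3(\mathbb C)$ is the same as a triple $(L,P,Q)$ in $\mathfrak{sl}_3(\mathbb C)$ satisfying $[L,P]=P$, $[L,Q]=-Q$ and $[P,Q]=0$; it is an embedding exactly when $P\neq 0$ and $Q\neq 0$ (then $L,P,Q$ lie in distinct eigenspaces of $\ad L$ and are automatically independent, with $L\neq0$ forced by $[L,P]=P$). Two embeddings are inner-equivalent precisely when the triples are conjugate under $\SL_3(\mathbb C)$, so the task reduces to classifying such triples up to simultaneous conjugation.

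First I would extract the structure of $L$ through its Jordan decomposition $L=S+N$ ($S$ semisimple, $N$ nilpotent, $[S,N]=0$), noting that $\ad L=\ad S+\ad N$ is the Jordan decomposition of the operator $\ad L$. Because $P$ is a genuine eigenvector of $\ad L$ for the nonzero eigenvalue $1$, it must lie in the generalized $1$-eigenspace, on which $\ad S$ acts as the scalar $1$; hence $\ad S\,P=P$ and $\ad N\,P=0$, and symmetrically $\ad S\,Q=-Q$, $\ad N\,Q=0$. Diagonalizing $S=\mathrm{diag}(\mu_1,\mu_2,\mu_3)$ with $\mu_1+\mu_2+\mu_3=0$, the element $P$ lies in the root space $V_{+1}=\langle e_{ij}:\mu_i-\mu_j=1\rangle$ and $Q$ in $V_{-1}$.

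The combinatorial heart is then a case analysis on which differences $\mu_i-\mu_j$ equal $1$, run against the constraint $[P,Q]=0$. If only a single difference equals $1$, then $P=\alpha e_{ij}$, $Q=\beta e_{ji}$ and $[P,Q]=\alpha\beta(e_{ii}-e_{jj})\neq0$, ruling this out; the two ``fork'' patterns (a common index shared by two differences equal to $1$, forcing eigenvalues $\{2/3,-1/3,-1/3\}$ or $\{1/3,1/3,-2/3\}$) are excluded by the same kind of direct commutator computation, which shows $[P,Q]=0$ forces $P=0$ or $Q=0$. Only the ``equally spaced'' pattern survives: two differences equal to $1$ sharing a middle index forces $\{\mu_1,\mu_2,\mu_3\}=\{1,0,-1\}$, and here imposing $[P,Q]=0$ leaves exactly two one-parameter solutions, which after rescaling by the maximal torus $\Cent(S)$ become the two triples corresponding to $\varphi_1$ and $\varphi_2$. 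Since $\{1,0,-1\}$ are distinct, $S$ is regular and its centralizer is the diagonal torus, which contains no nonzero nilpotent; thus $N=0$ and $L=S$ is semisimple, closing the gap left by the Jordan decomposition.

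Finally I would prove that the two normal forms are genuinely inequivalent under inner automorphisms. After normalizing both to $L=\mathrm{diag}(1,0,-1)$, any conjugation carrying one triple to the other must fix this regular $L$, hence lie in $\Cent(L)=T$ (the diagonal torus); but conjugation by $T$ only rescales each root vector and cannot move $P$ from one root line to another, so the two triples are not $\SL_3(\mathbb C)$-conjugate. I expect the main obstacle to be twofold: organizing the commutator case analysis cleanly enough that the non-semisimple case is dispatched for free (via regularity of $S$), and verifying the inequivalence of $\varphi_1$ and $\varphi_2$ under inner automorphisms, the subtlety being that they are interchanged by the outer transpose-inverse automorphism of $\mathfrak{sl}_3(\mathbb C)$ and so are equivalent under the full automorphism group but not under the inner ones.
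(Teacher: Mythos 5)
Your argument is sound, but note first that the paper contains no proof of this statement to compare against: Theorem \ref{teo2} is imported verbatim from Douglas--Repka--Joseph \cite{dougnew}, and the authors use it only as input for writing down the module actions \eqref{eq5} and \eqref{eq6}. Your route --- encode a homomorphism as a triple $(L,P,Q)$ in $\mathfrak{sl}_3(\mathbb{C})$ with $[L,P]=P$, $[L,Q]=-Q$, $[P,Q]=0$, classify up to simultaneous conjugation, use the Jordan decomposition $L=S+N$ to place $P$ and $Q$ in honest eigenspaces of $\ad S$, and then run the case analysis on which differences $\mu_i-\mu_j$ equal $1$ --- is correct and complete in outline. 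The case analysis is exhaustive because at most two of the three unordered pairs $\{i,j\}$ can satisfy $\mu_i-\mu_j=\pm 1$ (a third would force $1+1=1$ or a vanishing difference), so ``single edge'', ``fork'' and ``chain'' are the only patterns; your commutator computations do kill the first two, and in the chain pattern, with $S=\mathrm{diag}(1,0,-1)$, $P=ae_{12}+be_{23}$ and $Q=ce_{21}+de_{32}$, the condition $[P,Q]=0$ reduces to $ac=bd=0$, leaving exactly the two orbits you describe. (A small imprecision: these are two-parameter families before the torus acts, not one-parameter, but the two-dimensional diagonal torus acts transitively on each, so the normal forms are as claimed.) Regularity of $S$ correctly disposes of $N$ and also yields the inequivalence of the two normal forms, since the stabilizer of a regular semisimple $L$ is the diagonal torus and preserves root lines. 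What your proof buys is self-containedness: it supplies an elementary verification of a result the paper takes on faith, including the genuinely relevant observation that $\varphi_1$ and $\varphi_2$ are exchanged by the outer automorphism $X\mapsto -X^{T}$ and are therefore distinguished only at the level of \emph{inner} equivalence, which is exactly the equivalence the theorem asserts.
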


Similar as in Remark \ref{remark1}, using the matrices $\{\varphi_i(p_+),\varphi_i(p_-),\varphi_i(l)\}, \ i=1,2$ from Theorem \ref{teo2}, we define non-isomorphic $\mathfrak{e}(2)$-module structures on a vector space $V=\{X_1, X_2, X_3\}$ as follows:
\begin{equation}  \label{eq5}    \left\{\begin{array}{ll}
(X_1,p_+)= X_2,    & (X_1,p_-)= -X_3,      \\[1mm]
(X_2,l)=-X_2,  & (X_3,l)=X_3,
\end{array}\right. \end{equation}

\begin{equation} \label{eq6} \left\{\begin{array}{ll}
(X_1,p_-)= -X_3,    & (X_2,p_+)= X_3,      \\[1mm]
(X_1,l)=-X_1,  & (X_2,l)=X_2,\\[1mm]
\end{array}\right. \end{equation}
the remaining products in the actions are zero.

\subsection{$\mathfrak{sp}_4(\mathbb{C})$-modules as $\mathfrak{e}(2)$-modules}

The symplectic algebra $\mathfrak{sp}_4(\mathbb{C})$ is the Lie algebra of
$4 \times 4$ complex matrices $X$ satisfying $J X^T J=X$, where
$J$ is a
$4\times  4$ matrix  $$J=\left(%
\begin{array}{cc}
  0 & I_2 \\
  -I_2 &0 \\
\end{array}%
\right).$$

It is a 10-dimensional simple Lie algebra of type $C_2$
(equivalent to the simple Lie algebra of type $B_2$). A Chevalley
basis $\{ x_i, y_i, h_j \mid 1 \leq i \leq 4, 1 \leq j \leq 2 \}$
of $\mathfrak{sp}_4(\mathbb{C})$ is defined as follows:
$$ ah_1 + bh_2 + cx_1 + dx_2 + ex_3 + fx_4 + c' y_1 + d' y_2 + e' y_3 + f' y_4 = \left(%
\begin{array}{cccc}
  a & c & f & -e \\
  c' & -a+b & -e &  d \\
  f' & -e' & -a & -c' \\
  -e' & d' & -c & a-b \\
\end{array}%
\right).$$

  Dougles et.al. \cite{dougnew} present classification
of embeddings of  $\mathfrak{e}(2)$   into
$\mathfrak{sp}_4(\mathbb{C})$ in the next theorem.

\begin{teo}\label{teo3}
 There are three families of embeddings of $\mathfrak{e}(2)$   into
$\mathfrak{sp}_4(\mathbb{C} $, up to inner automorphism. Two
families contain a single embedding, and one family is infinite.
They are given as
$$\psi_1:\quad \quad p_+ \mapsto x_4,\quad p_- \mapsto x_3,\quad l \mapsto \frac{1}{2}h_1-h_2,$$
$$\psi_2:\quad\quad p_+ \mapsto x_3, \quad p_-\mapsto  x_4,\quad l \mapsto -\frac{1}{2}h_1 +h_2,$$
$$\psi_{3,\beta}:\quad\quad p_+\mapsto x_2,\quad p_- \mapsto x_4,\quad l \mapsto -\frac{1}{2}h_1 + \beta x_3,$$ where
$\beta \in \mathbb{C}$ and $\psi_{3,\alpha}\sim \psi_{3,\beta} $ iff
$\alpha^2=\beta^2$.
\end{teo}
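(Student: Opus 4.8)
The plan is to regard an embedding $\psi\colon \mathfrak{e}(2)\hookrightarrow \mathfrak{sp}_4(\mathbb{C})$ as a triple $(L,P_+,P_-)=(\psi(l),\psi(p_+),\psi(p_-))$ of linearly independent elements obeying the defining relations of the complexified Euclidean algebra, namely $[L,P_+]=P_+$, $[L,P_-]=-P_-$ and $[P_+,P_-]=0$ (these are exactly the relations one reads off the matrix realization of $\mathfrak{e}(2)$, as a direct computation confirms; conversely each exhibited $\psi_i$ is seen to be an injective homomorphism simply by checking these three relations). Since two embeddings are identified precisely when they differ by conjugation by an element of $\mathrm{Sp}_4(\mathbb{C})$, the whole problem is to normalize such a triple under inner automorphisms. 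The first reduction is to understand $L$: because $P_+$ and $P_-$ are nonzero eigenvectors of $\ad L$ with eigenvalues $+1$ and $-1$, the operator $\ad L$ is nonzero and has $\pm 1$ in its spectrum.

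Next I would exploit the Jordan decomposition $L=S+N$ in $\mathfrak{sp}_4(\mathbb{C})$, with $S$ semisimple, $N$ nilpotent and $[S,N]=0$, so that $\ad S$ and $\ad N$ are the commuting Jordan parts of $\ad L$. A short linear-algebra argument shows that an eigenvector of a sum of commuting semisimple and nilpotent operators must already be a genuine eigenvector of the semisimple part annihilated by the nilpotent part; hence $\ad S\,P_\pm=\pm P_\pm$ and $\ad N\,P_\pm=0$. Conjugating by $\mathrm{Sp}_4(\mathbb{C})$ I may place $S$ in a fixed Cartan subalgebra $\mathfrak h$, so that the $\pm 1$-eigenspaces of $\ad S$ are spanned by the root vectors $e_\alpha$ with $\alpha(S)=\pm 1$; consequently $P_+$ and $P_-$ are forced to be linear combinations of such root vectors. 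Using the Weyl group (realized by inner automorphisms) to move $S$ into a closed dominant chamber and invoking the finiteness of the $C_2$ root system, I would enumerate the finitely many admissible configurations of roots $\{\alpha:\alpha(S)=\pm 1\}$; each configuration also pins down the reductive centralizer $\mathfrak g^{S}$ in which the nilpotent part $N$ must live, since $[S,N]=0$.

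For each admissible $S$ I would then impose the remaining constraints — abelianness $[P_+,P_-]=0$, the annihilation $\ad N\,P_\pm=0$, and linear independence of $L,P_+,P_-$ — writing $P_\pm=\sum_{\alpha(S)=\pm1}c_\alpha e_\alpha$ and $N\in\mathfrak g^{S}$, and solving the resulting bilinear equations in the coefficients. Finally, the residual inner automorphisms (the connected centralizer of $S$ together with the part of the Weyl group fixing $S$) are used to normalize the surviving parameters, and it is at this stage that the three families should appear: the two rigid embeddings $\psi_1,\psi_2$, where $N=0$ and the root-vector coefficients can be scaled to the standard generators $x_3,x_4$, and the one-parameter family $\psi_{3,\beta}$, where a nonzero nilpotent $N=\beta x_3\in\mathfrak g^{S}$ persists. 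I expect the main obstacle to be exactly this last normalization: showing that for the third family the centralizer torus rescales the coefficient only up to a sign — so that $\beta$ survives as a genuine modulus with $\psi_{3,\alpha}\sim\psi_{3,\beta}$ precisely when $\alpha^2=\beta^2$ — and, dually, establishing exhaustiveness and pairwise non-conjugacy of the three families, which requires tracking the conjugacy invariants (the $\mathrm{Sp}_4$-orbit of $S$, the nilpotent orbit of $N$, and the residual sign) carefully enough to rule out hidden coincidences.
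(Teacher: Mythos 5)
You should first be aware that the paper contains no proof of this statement: Theorem \ref{teo3} is quoted verbatim from Douglas--Repka--Joseph \cite{dougnew}, so there is no internal argument to compare yours against. Judged on its own terms, your strategy is the standard (and essentially the cited authors') one, and its individual reductions are correct: the relations $[L,P_+]=P_+$, $[L,P_-]=-P_-$, $[P_+,P_-]=0$ do characterize the complexified $\mathfrak{e}(2)$ (after the usual diagonalizing change of basis $H_1\pm iH_2$ in the matrix realization, which you should state rather than wave at); the lemma that an eigenvector of $\ad L$ with nonzero eigenvalue is an eigenvector of $\ad S$ killed by $\ad N$ is a correct and genuinely useful observation, and it correctly predicts where the nilpotent summand $\beta x_3$ of $\psi_{3,\beta}$ comes from.

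The genuine gap is that the proposal stops exactly where the content of the theorem begins. Everything asserted in the statement --- that there are precisely three families, that two are rigid and one is a genuine one-parameter family, and that $\psi_{3,\alpha}\sim\psi_{3,\beta}$ iff $\alpha^2=\beta^2$ --- lives in the case analysis you defer. That analysis is not a routine finite check: the locus of admissible semisimple parts $S$ in the closed dominant chamber of $C_2$ with $\alpha(S)=\pm1$ for some root is a union of line segments, not a finite set, and $S$ only gets pinned down after you impose $[P_+,P_-]=0$ (note that if a single root hyperplane is involved then $[P_+,P_-]$ is a nonzero multiple of a coroot, so at least two roots must take the value $1$ on $S$, or the commutator must land outside the root system); none of this is carried out. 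Likewise the two hardest points you yourself flag --- that the centralizer torus of $S=-\tfrac12 h_1$ rescales $\beta$ only up to sign, and that no two of the resulting normal forms are conjugate --- are exactly the statements that need proof, and ``tracking the conjugacy invariants carefully enough'' is a restatement of the problem rather than an argument. As it stands the proposal is a sound plan for a proof, not a proof; to complete it you would need to enumerate the admissible configurations of $C_2$ roots explicitly and exhibit the normalizations, or else simply cite \cite{dougnew} as the paper does.
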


Similarly as before, we have $\psi_2=\varepsilon \circ \psi_1$, where $\varepsilon :  \{x_3, x_4, \frac{1}{2}h_1-h_2\}  \rightarrow \{x_3, x_4, \frac{1}{2}h_1-h_2\}$ with $\varepsilon(x_4)=x_3,  \varepsilon(x_3)=x_4,  \varepsilon(\frac{1}{2}h_1-h_2)=-\frac{1}{2}h_1+h_2$. Therefore, it is not necessary to consider the module constructed via representation $\psi_2$.

Analogously as in Remark \ref{remark1} applying results of Theorem \ref{teo3}, by the transformations $\psi_1(J),\psi_1(P_{+}),\psi_1(P_{-})$ and $\{\psi_3(p_+),\psi_3(p_-),\psi_3(l)\}$ we define two non-isomorphic $\mathfrak{e}(2)$-module structures on a vector space $V=\{X_1,X_2,X_3,X_4\}$ in a similar way:
\begin{equation} \label{eq7} \left\{\begin{array}{lll}
(X_1,p_+)= X_3,   & (X_1,p_-)=-X_4, & (X_2,p_{-})=-X_3, \\[1mm]
(X_1,l)=\frac{1}{2}X_1,    & (X_2,l)=-\frac{3}{2}X_2, \\[1mm]  (X_3,l)=-\frac{1}{2}X_3, & (X_4,l)= \frac{3}{2} X_4,
\end{array}\right. \end{equation}

\begin{equation} \label{eq8} \left\{\begin{array}{ll}
(X_2,p_+)= X_4,    & (X_1,p_-)= X_3,      \\[1mm]
(X_1,l)=-\frac{1}{2}X_1-\beta X_4,  & (X_2,l)=\frac{1}{2}X_2-\beta X_3.\\[1mm]
(X_3,l)= \frac{1}{2}X_3,  & (X_4,l)=-\frac{1}{2}X_4.\\[1mm]
\end{array}\right.  \end{equation}

\section{Main results}

\subsection{Leibniz algebras associated with representation of Euclidean Lie algebra $\mathfrak{e}(2)$
considered as a subalgebra of $\mathfrak{sl}_2(\mathbb{C})\oplus
\mathfrak{sl}_2(\mathbb{C}) $ }

In this subsection we describe Leibniz algebras $L$ with $L/ I \cong \mathfrak{e}(2)$ and the ideal $I$ being a four-dimensional
$\mathfrak{e}(2)$-module defined by (\ref{eq111}). In this case an algebra $L$ have a basis $\{ l, p_+, p_-, X_1, X_2, X_3, X_4\}.$

Let us introduce denotations
\begin{equation}\label{eq99}\left\{\begin{array}{ll}
[l,p_+]=p_+ +  \sum\limits_{i=1}^4f_iX_i,    &[p_+,l]=-p_+ +  \sum\limits_{i=1}^4a_iX_i, \\[1mm]
[l,p_-]=-p_- +  \sum\limits_{i=1}^4g_iX_i, &[p_-,l]=p_-+\sum\limits_{i=1}^4b_iX_i, \\[1mm]
[l,l]= \sum\limits_{i=1}^4c_iX_i,  & [p_+,p_+]=\sum\limits_{i=1}^4d_iX_i, \\[1mm]
[p_-,p_-]=\sum\limits_{i=1}^4e_iX_i, & [p_+,p_-]=\sum\limits_{i=1}^4m_iX_i, \\[1mm]
[p_-,p_+]=\sum\limits_{i=1}^4n_iX_i. &   \\[1mm]
\end{array}\right.\end{equation}

\begin{teo} Let $L$ be a Leibniz algebra with an associated Lie algebra
$\mathfrak{e}(2)$ and the ideal $I$ being the
$\mathfrak{e}(2)$-module defined by (\ref{eq111}).
Then there exists a basis $\{ l,p_+,p_-, X_1, X_2, X_3, X_4\}$ of an algebra $L$ such
that its table of multiplication is in the following form:
\begin{equation}\label{eq10}  \left\{\begin{array}{ll}
[l,p_+]=p_+,    &[p_+,l]=-p_+, \\[1mm]
[l,p_-]=-p_-, &[p_-,l]=p_-, \\[1mm]
[X_1,p_+]=X_2, & [X_3,p_-]=X_4,\\[1mm]
[X_1,l]=\frac{1}{2}X_1, &   [X_2,l]=-\frac{1}{2}X_2, \\[1mm]
[X_3,l]=-\frac{1}{2}X_3,  & [X_4,l]= \frac{1}{2}X_4.
\end{array}\right.\end{equation}
\end{teo}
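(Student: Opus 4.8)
The plan is to pin down all the structure constants in (\ref{eq99}) by combining the Leibniz identity with the two structural facts already available for $L$: that the ideal $I=\langle X_1,X_2,X_3,X_4\rangle$ lies in the right annihilator, so that $[l,X_i]=[p_+,X_i]=[p_-,X_i]=0$ and $[X_i,X_j]=0$, and that the right action of $\{l,p_+,p_-\}$ on $I$ is prescribed by (\ref{eq111}). Together with (\ref{eq99}) this fixes the entire multiplication table up to the $36$ unknowns $f_i,a_i,g_i,b_i,c_i,d_i,e_i,m_i,n_i$ ($1\le i\le 4$), whose vanishing is exactly the content of (\ref{eq10}). Observe that the linear parts of the generator products are already forced to reproduce the complexified brackets $[l,p_\pm]=\pm p_\pm$ and $[p_+,p_-]=0$ of $\mathfrak{e}(2)$, so that only the $I$-valued corrections and the squares remain to be removed.

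First I would impose the Leibniz identity $\big[[x,y],z\big]=\big[[x,z],y\big]+\big[x,[y,z]\big]$ on every ordered triple $(x,y,z)$ with $x,y,z\in\{l,p_+,p_-\}$. Because $[L,I]=0$ and $I$ is abelian, whenever an inner bracket produces an element of $I$ the subsequent multiplication either vanishes (on the left) or is evaluated through the module action (\ref{eq111}); consequently each triple collapses to a linear relation among the unknowns, obtained by matching the coefficients of $X_1,\dots,X_4$. For instance, the triple $(p_+,l,p_+)$ already forces $d_1=d_3=d_4=0$ and $a_1=\frac{3}{2}d_2$, while $(l,p_+,l)$ yields $a_1=-\frac{3}{2}f_1$ and $a_3=-\frac{1}{2}f_3$. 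Running through all $27$ triples in this way, I expect the Leibniz identity to annihilate most of the parameters outright and to express the survivors in terms of a small number of free ones.

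Next I would eliminate the remaining parameters by a change of basis that preserves both the liezation and the module (\ref{eq111}). The admissible transformations are the $I$-valued shifts $l\mapsto l+\sum_i\lambda_iX_i$, $p_+\mapsto p_++\sum_i\mu_iX_i$, $p_-\mapsto p_-+\sum_i\nu_iX_i$, together, if needed, with a module automorphism of $I$. Since $[L,I]=0$, each such shift leaves (\ref{eq111}) intact and acts on the $I$-components of the generator products as a coboundary. Tracking these actions, for example $p_+\mapsto p_++\sum_i t_iX_i$ sends $f_i\mapsto f_i-t_i$ while correspondingly modifying the $a_i$ and $d_i$, and $l\mapsto l+\sum_i\lambda_iX_i$ clears the squares of $l$ through $c_i\mapsto c_i\pm\frac{1}{2}\lambda_i$, I would choose the shift parameters so as to zero out all surviving corrections at once and thereby reach (\ref{eq10}).

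The main obstacle is this last step rather than the first: the shifts interact, since the same $\lambda_i$ that clears $c_i$ also perturbs some $f_j$, and so on, so the real work is to verify that the coboundary action is rich enough to kill every parameter left alive by the Leibniz identity without reintroducing others, that is, that no genuine invariant survives. This is where the bookkeeping must be done carefully, and where I would confirm that the residual parameter space is spanned entirely by coboundaries, so that the normalized table (\ref{eq10}) is attained and represents the unique model up to isomorphism.
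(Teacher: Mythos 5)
Your proposal is correct and matches the paper's argument in substance: both rest on $I$-valued shifts of $l,p_+,p_-$ (coboundaries) together with the Leibniz identity evaluated on generator triples, exactly the two ingredients you identify. The only difference is order --- the paper performs the normalizing shift \emph{first}, so that $[l',l']=0$ and $[l',p_\pm']=\pm p_\pm'$ hold exactly, after which the seven listed Leibniz identities annihilate every remaining parameter with no further basis change; this resolves the concern you raise at the end about whether the coboundary action suffices.
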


\begin{proof}
Take a change of basis elements $\{l, p_+, p_-\}$ as follows:
$$l'= l -2c_1 X_1+ 2c_2X_2 + 2c_3 X_3-2c_4 X_4, \ \  p_+'=p_+-2c_1 X_2 +
\sum\limits_{i=1}^4f_iX_i, \ \  p_-' =p_- -2c_3 X_4 -
\sum\limits_{i=1}^4g_iX_i.$$ Then by using notations (\ref{eq99}) we conclude that
$$ [l',l']=0, \ \ [l',p_+']=p_+', \ \ [l',p_-']=-p_-'.
$$

Considering Leibniz identity for triples mentioned below we obtain
$$\left\{\begin{array}{lll}
(l',p_+',p_-') & \Rightarrow & m_1=-n_1, m_3=-n_3, m_2=-n_2-g_1, m_4=-n_4-f_3,\\[1mm]
(l',p_+',l') & \Rightarrow & \frac{3}{2}f_1=-a_1,\frac{1}{2}f_2=c_1-a_2, \frac{1}{2}f_3=-a_3, \frac{3}{2}f_4=-a_4, \\[1mm]
(l',p_-',l') & \Rightarrow & b_1=-\frac{1}{2}g_1, b_2=-\frac{3}{2}g_2, b_3=-\frac{3}{2}g_3, b_4=-\frac{1}{2}g_4-c_3, \\[1mm]
(p_+',l',p_+') & \Rightarrow & d_1=d_3=d_4=0, \frac{3}{2}d_2=a_1, \\[1mm]
(p_-',l',p_-') & \Rightarrow & e_1=e_2=e_3=0,  \frac{3}{2}e_4=-b_3,  \\[1mm]
(p_+',l',p_-') & \Rightarrow & m_1=m_2=m_3=0, m_4=2a_3,  \\[1mm]
(p_-',l',p_+') & \Rightarrow & n_1=n_3=n_4=0, n_2=-2b_1, \\[1mm]
\end{array}\right.$$

From these we obtain the table of multiplication (\ref{eq10}).
\end{proof}

\subsection{Leibniz algebras associated with representation of Euclidean Lie algebra $\mathfrak{e}(2)$ considered as a subalgebra of $\mathfrak{sl}_3(\mathbb{C})$ }

Let $L$ be a Leibniz algebra such that $L/ I \cong \mathfrak{e}(2)$ and the ideal $I$ of $L$ is an $\mathfrak{e}(2)$-module defined by either (\ref{eq5}) or (\ref{eq6}).

The following proposition defines the products of basis elements.

\begin{pr} Let $L$ be a Leibniz algebra with associated Lie algebra
$\mathfrak{e}(2)$ and $I$ be an $\mathfrak{e}(2)$-module defined by (\ref{eq5}).
Then there exists a basis $\{ l, p_+, p_-, X_1, X_2, X_3\}$ of $L$ such that its table of multiplications has the following form:
$$ K_1(\alpha_1,\alpha_2): \quad \left\{\begin{array}{ll}
[l, p_+]=p_+,    & [p_+, l]=-p_+, \\[1mm]
[l, p_-]=-p_-, & [p_-, l]=p_-, \\[1mm]
[l, l]=  \alpha_1 X_1,  & [p_+, p_-]= \alpha_2 X_1, \\[1mm]
[p_-,p_+]=-\alpha_2 X_1, & [X_1,p_+]= X_2,   \\[1mm]
     [X_1,p_-]= -X_3, & [X_2,l]=-X_2,     \\[1mm]
    [X_3,l]=X_3.
\end{array}\right.
$$
\end{pr}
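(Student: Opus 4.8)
The plan is to mirror the proof of the preceding theorem: write down the most general Leibniz algebra compatible with the given data, then exploit the Leibniz identity on carefully chosen triples to force almost all structure constants to vanish, and finally absorb the surviving constants by a change of basis. Concretely, I would first set up notation exactly as in \eqref{eq99}, writing each product $[l,p_+],[p_+,l],[l,p_-],[p_-,l],[l,l],[p_+,p_+],[p_-,p_-],[p_+,p_-],[p_-,p_+]$ as the ``Lie part'' dictated by $\mathfrak{e}(2)$ plus an arbitrary element $\sum_{i=1}^{3}(\cdot)_i X_i$ of the ideal $I$. The action of $l,p_+,p_-$ on $X_1,X_2,X_3$ is fixed by \eqref{eq5}, since $I$ lies in the right annihilator and the module structure is prescribed. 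This reduces the problem to determining the $9\times 3$ array of unknown coefficients.

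Next I would impose the Leibniz identity $\bigl[[u,v],w\bigr]=\bigl[[u,w],v\bigr]+\bigl[u,[v,w]\bigr]$ on the same systematic list of triples used before, namely those drawn from $\{l,p_+,p_-\}$ in the patterns $(l,p_+,p_-)$, $(l,p_+,l)$, $(l,p_-,l)$, $(p_+,l,p_+)$, $(p_-,l,p_-)$, $(p_+,l,p_-)$, $(p_-,l,p_+)$, and I would also need the triples governing $[p_+,p_+]$, $[p_-,p_-]$ and the $l$-action eigenvalue relations. Because the $\mathfrak{e}(2)$-module \eqref{eq5} has weight structure $X_1\mapsto 0$, $X_2\mapsto -1$, $X_3\mapsto +1$ under $l$, applying $\bigl[[\,\cdot\,,l],\cdot\bigr]$ forces the $X_2$- and $X_3$-components of every weight-zero product (such as $[l,l]$, $[p_+,p_-]$, $[p_-,p_+]$) to vanish, while the weight-$\pm1$ products $[p_+,p_+]$, $[p_-,p_-]$ are annihilated outright; this is the computational heart and should collapse all coefficients except for the $X_1$-components of $[l,l]$, $[p_+,p_-]$, $[p_-,p_+]$, together with a relation forcing $[p_-,p_+]=-[p_+,p_-]$ on the ideal. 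The surviving parameters are precisely the $\alpha_1,\alpha_2$ appearing in $K_1(\alpha_1,\alpha_2)$.

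Finally I would perform a basis change of the shape $l'=l+\sum\lambda_iX_i$, $p_+'=p_++\sum\mu_iX_i$, $p_-'=p_-+\sum\nu_iX_i$, exactly as in the theorem's proof, choosing the shifts to kill every remaining coefficient that can be eliminated. The key point is that $\alpha_1$ and $\alpha_2$ genuinely survive: because $X_1$ is a module invariant (it is annihilated by $l$ and produces $X_2,-X_3$ only under $p_+,p_-$), no admissible translation of the basis alters the $X_1$-components of $[l,l]$ and $[p_+,p_-]$, so these two parameters remain as essential invariants and the family $K_1(\alpha_1,\alpha_2)$ cannot be further reduced. I expect the main obstacle to be bookkeeping the many triples without arithmetic slips, and in particular verifying that the translation used to normalize the eigenvalue parts $[l,p_\pm]=\pm p_\pm$ does not reintroduce $X_1$-terms into $[l,l]$ or $[p_+,p_-]$; confirming this non-interference is what justifies presenting $\alpha_1,\alpha_2$ as free parameters rather than normalizable constants.
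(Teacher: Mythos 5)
Your proposal follows essentially the same route as the paper's proof: the same general ansatz as in \eqref{eq99} (restricted to $X_1,X_2,X_3$), the same list of Leibniz triples $(l,p_+,p_-)$, $(l,p_\pm,l)$, $(p_\pm,l,p_\pm)$, $(p_\pm,l,p_\mp)$, and the same translation of $l,p_+,p_-$ by elements of $I$, with the $X_1$-components of $[l,l]$ and $[p_+,p_-]$ surviving precisely because $X_1$ is annihilated by the right action of $l$ and is not in the image of the actions of $p_\pm$. One minor caution: $\alpha_1,\alpha_2$ are invariant under translations but not under rescalings of the generators (the subsequent theorem normalizes each to $0$ or $1$), so calling them essential invariants overstates matters, though this does not affect the validity of the proposition itself.
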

\begin{proof}

Let us set the products of basis elements $\{p_+,p_-,l\}$ similar as in (\ref{eq99})but without $X_4$ participation.

Taking the change of basis elements $\{l, p_+, p_-\}$ as follows
$$l'= l -c_1X_1 +c_2 X_2 -c_3 X_3, \ \  p_+'=p_+ -c_1X_2+ \sum\limits_{i=1}^3f_iX_i, \ \  p_-' =p_- - c_1X_3 - \sum\limits_{i=1}^3g_iX_i,$$
we can assume
$$ [l, l]=c_1X_1, \ \ [l, p_+]=p_+, \ \ [l, p_-]=-p_-.
$$

Considering Leibniz identities for different triples we deduce
$$\left\{\begin{array}{lll}
(l, p_+, p_-) & \Rightarrow & m_1=-n_1, \ m_2=-n_2 +g_1, \ m_3=-n_3 + f_1,   \\[1mm]
(l, p_+, l) & \Rightarrow & a_1=-f_1, a_2=0, a_3=-2f_3, \\[1mm]
(l, p_-, l) & \Rightarrow & b_1=-g_1, \ b_2=-2g_2, \ b_3=0, \\[1mm]
(p_+, l, p_+) & \Rightarrow & d_1=d_3=0, \ d_2=a_1, \\[1mm]
(p_-, l, p_-) & \Rightarrow & e_1=e_2=0, e_3=b_1,  \\[1mm]
(p_+, l, p_-) & \Rightarrow & m_2=0, m_3=-a_1,  \\[1mm]
(p_-, l, p_+) & \Rightarrow & n_2=-b_1, n_3=0. \\[1mm]
\end{array}\right.$$

Putting $\alpha_1:=c_1$ and $\alpha_2:=m_1$ we get the family of algebras $K_1(\alpha_1,\alpha_2).$
\end{proof}

In the next theorem we identify the representatives (up to isomorphism) of the family of algebras $K_1(\alpha_1,\alpha_2).$

\begin{teo}\label{theorem5} An arbitrary algebra of the family $K_1(\alpha_1,\alpha_2)$ is isomorphic to
one of the following pairwise non-isomorphic algebras:
$$K_1(1,1), \quad K_1(1,0) \quad K_1(0,1) \quad K_1(0,0).$$
\end{teo}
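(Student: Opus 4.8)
The plan is to show that the two parameters $(\alpha_1,\alpha_2)$ can each be scaled to $0$ or $1$ by a suitable change of basis, and then to verify that the four resulting algebras are genuinely non-isomorphic. The key observation is that $\alpha_1$ multiplies $X_1$ in the product $[l,l]$ while $\alpha_2$ multiplies $X_1$ in $[p_+,p_-]$, so I must track how $X_1$, $X_2$, $X_3$ and the generators $l,p_+,p_-$ transform under an arbitrary automorphism and see how $\alpha_1,\alpha_2$ rescale.

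First I would set up a general change of basis respecting the structure. The semisimple part forces the new generators to have the form $l'=\gamma l + (\text{terms in } X_i)$, $p_+'=\delta p_+ + (\cdots)$, $p_-'=\eta p_- + (\cdots)$, where the diagonal action of $l$ on the $X_i$ (weights $0,-1,1$ on $X_1,X_2,X_3$) severely restricts which $X_i$ may appear in each new generator; in particular $X_1$ has weight $0$ and can mix with $l$, while $X_2,X_3$ can mix with $p_+,p_-$ respectively. I would then compute the images of $X_1,X_2,X_3$ under the module map — from $[X_1,p_+']=X_2'$ and $[X_1,p_-']=-X_3'$ together with $[X_2',l']$ and $[X_3',l']$ one reads off that $X_1$ scales by some factor $\kappa$, and $X_2',X_3'$ scale accordingly. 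Substituting into $[l',l']$ and $[p_+',p_-']$ yields $\alpha_1'=\gamma^2\kappa^{-1}\alpha_1$ and $\alpha_2'=\delta\eta\kappa^{-1}\alpha_2$ (up to the precise exponents, which the computation pins down). Since $\gamma,\delta,\eta,\kappa$ are free nonzero scalars, each nonzero $\alpha_i$ can be normalized to $1$ and each zero $\alpha_i$ stays zero, giving exactly the four representatives $K_1(1,1),K_1(1,0),K_1(0,1),K_1(0,0)$.

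The second half is the non-isomorphism. Here the cleanest invariant is whether the Lie algebra $L$ itself is a Lie algebra, equivalently whether the non-Lie ideal $I$ meets the relevant products: $\alpha_1=0$ exactly characterizes $[l,l]=0$, a basis-independent condition on $L$ since $[l,l]\in I$ and the weight-$0$ component of $I$ is the line $\langle X_1\rangle$. Similarly $\alpha_2=0$ characterizes the vanishing of the symmetric part of the product on $\langle p_+,p_-\rangle$ landing in $I$. I would phrase both as invariants of the bilinear map $L\times L\to I$ restricted to weight spaces, so that no change of basis can turn a zero into a nonzero value; distinguishing $(1,1),(1,0),(0,1),(0,0)$ then follows by comparing these two independent $\{0,\text{nonzero}\}$ invariants.

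The main obstacle will be bookkeeping in the first step: confirming the exact exponents of the scaling factors and checking that the off-diagonal $X_i$-terms one is free to add to $l',p_+',p_-'$ do not interfere with (or, better, can be used to clean up) the coefficients of $\alpha_1,\alpha_2$. One must be careful that adding, say, a multiple of $X_1$ to $l'$ could in principle shift $\alpha_1$, so I would verify that the Leibniz relations already forced in the preceding proposition constrain these cross terms enough that the only surviving freedom is the overall rescaling. Once the scaling law $\alpha_i\mapsto(\text{nonzero scalar})\cdot\alpha_i$ is established, the classification into the four stated algebras is immediate.
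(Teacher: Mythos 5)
Your proposal follows essentially the same route as the paper: a general change of the generators $l,p_+,p_-,X_1$ yields the scaling laws $\alpha_1'=\alpha_1/D_1$ and $\alpha_2'=B_2C_3\alpha_2/D_1$ (your $\gamma$ is in fact forced to equal $1$ by the induced automorphism of $\mathfrak{e}(2)$, consistent with your hedge about the precise exponents), after which normalization produces the four representatives. The paper leaves pairwise non-isomorphism implicit in the fact that these scaling laws preserve the vanishing of each $\alpha_i$; your explicit invariant-based phrasing of that point is the same argument made slightly more formal.
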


\begin{proof} In order to achieve our goal we consider isomorphism (basis transformation) inside the family
$K_1(\alpha_1,\alpha_2).$ Since $\{l, p_+, p_-, X_1\}$ are
generators of the algebra, we take the general
transformation of these basis elements:
$$\begin{array}{ll}
l'=A_1l+A_2p_+ +A_3p_- + \sum\limits_{i=1}^3A_{i+3}X_i, & p_+'=B_1 l+ B_2 p_+ +B_3 p_- + \sum\limits_{i=1}^3B_{i+3}X_i,\\[3mm]
p_-'=C_1 l+ C_2 p_+ +C_3 p_- + \sum\limits_{i=1}^3C_{i+3}X_i, & X_1'=\sum\limits_{i=1}^3D_{i}X_i.\\[3mm]
\end{array}$$

Then the rest of the basis elements are obtained from the products $X_2'=[X_1',p_+']$ and $X_3'=-[X_1',p_-'],$ that is,
$$X_2'=(D_1 B_2 - D_2 B_1) X_2 + (D_3 B_1 - D_1 B_3)X_3, \quad X_3'=(D_2 C_1 - D_1 C_2) X_2 + (D_1C_3 -D_3C_1) X_3.$$

Now we apply the following procedure:

\begin{itemize}

\item

in the first step we obtain all the products $[ *' , * ']$ by substituting the above basis transformation and applying the products $[* , * ]$;

\item

in the second step in the expression of the products $[ *' , * ']$ of the algebra $K_1(\alpha_1',\alpha_2')$ in the basis
$\{ l', p_+', p_-', X_1', X_2', X_3'\}$ we substitute the above basis transformation;

\item

in the third step comparing two expressions obtained in the previous steps we derive the expressions for $\alpha_1', \ \alpha_2'$ in terms of parameters $\alpha_1, \ \alpha_2, A_i, B_i, C_i, D_i.$

\end{itemize}

Applying the procedure gives the following expressions:
$$\alpha_1'=\frac{\alpha_1}{D_1}, \quad \quad \alpha_2'=\frac{B_2C_3\alpha_2}{D_1} \ \mbox{with} \ B_2C_3D_1\neq 0.$$

{\bf Case 1.}  Let $\alpha_2\neq 0.$ By putting $D_1=B_2C_3\alpha_2$ we get $\alpha_2'=1$
 and $\alpha_1'=\frac{\alpha_1}{B_2C_3\alpha_2}.$

If $\alpha_1=0$ then we obtain the algebra $K_1(0,1)$.

If $\alpha_1\neq 0$ then taking $B_2=\frac{\alpha_1}{C_3\alpha_2}$ we get $K_1(1,1)$.

{\bf Case 2.} Let $\alpha_2=0.$ Then $\alpha_2'=0$.

If $\alpha_1=0$ then we obtain the algebra $K_1(0,0)$.

If $\alpha_1\neq0$ then taking $D_1= \alpha_1 $ we have $K_1(1,0)$.
\end{proof}

In a similar way we derive the corresponding results in the case when the ideal $I$ is an $\mathfrak{e}(2)$-module defined by (\ref{eq6}).

\begin{pr} Let $L$ be a Leibniz algebra with associated Lie algebra
$\mathfrak{e}(2)$ and $I$ be an $\mathfrak{e}(2)$-module defined by (\ref{eq6}).
Then there exists a basis $\{ l, p_+, p_-, X_1, X_2, X_3\}$ of $L$ such that its table of multiplications has the following form:
\begin{equation}\label{eq11} K_2(\alpha_1,\alpha_2)=\left\{\begin{array}{ll}
[l,p_+]=p_+,    &[p_+,l]=-p_+, \\[1mm]
[l,p_-]=-p_-, &[p_-,l]=p_-, \\[1mm]
[l,l]=  \alpha_1 X_3,  &
  [p_+,p_-]= \alpha_2 X_3, \\[1mm]
[p_-,p_+]=-\alpha_2 X_3, &
[X_1,p_-]= -X_3,   \\[1mm]
    [X_1,l]= -X_1, & [X_2,p_+]=X_3,     \\[1mm]
    [X_2,l]=X_2.
\end{array}\right.\end{equation}
\end{pr}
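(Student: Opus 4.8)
The plan is to follow exactly the scheme used for the module (\ref{eq5}) in the proposition above. First I would write the products of the generators $\{l,p_+,p_-\}$ in the general form (\ref{eq99}), but now with the index running only over $X_1,X_2,X_3$ (there is no $X_4$), so that every square or cross product of two generators is an unknown combination $\sum_{i=1}^{3}(\cdot)X_i$ with coefficients $a_i,b_i,c_i,d_i,e_i,f_i,g_i,m_i,n_i$. Throughout I would use $[L,I]=0$, so that the $X_j$ enter only through the right action, which is the module (\ref{eq6}).

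The decisive structural point, which dictates which coefficients ultimately survive, is the action of $l$ on $I$: by (\ref{eq6}) right multiplication by $l$ has eigenvalues $-1,1,0$ on $X_1,X_2,X_3$ respectively, so its kernel is $\langle X_3\rangle$. Consequently a shift $l'=l+c_1X_1-c_2X_2$ cancels the $X_1$- and $X_2$-components of $[l,l]$ but cannot affect the $X_3$-component, leaving $[l',l']=c_3X_3$. Using in addition the off-diagonal actions $(X_2,p_+)=X_3$ and $(X_1,p_-)=-X_3$ to absorb the relevant terms, I would choose shifts $p_+'=p_++\sum f_iX_i$ and $p_-'=p_--\sum g_iX_i$, with the $X_3$-coefficients adjusted by $c_2$ and $c_1$, so that $[l',p_+']=p_+'$ and $[l',p_-']=-p_-'$. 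After dropping primes I may then assume $[l,l]=c_3X_3$, $[l,p_+]=p_+$, $[l,p_-]=-p_-$.

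Next I would impose the Leibniz identity on the triples $(l,p_+,l)$, $(l,p_-,l)$, $(p_+,l,p_+)$, $(p_-,l,p_-)$, $(l,p_+,p_-)$, $(p_+,l,p_-)$ and $(p_-,l,p_+)$, and read off the resulting linear relations. The triples $(l,p_+,l)$ and $(l,p_-,l)$ immediately force $a_i=0$ and $b_i=0$ for all $i$; substituting these into $(p_+,l,p_+)$ and $(p_-,l,p_-)$ then kills $d_i$ and $e_i$, so that $[p_+,p_+]=[p_-,p_-]=0$. The triple $(l,p_+,p_-)$ yields $m_i=-n_i$, while $(p_+,l,p_-)$ and $(p_-,l,p_+)$ remove the $X_1$- and $X_2$-components, leaving only $m_3$ together with $n_3=-m_3$. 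Setting $\alpha_1:=c_3$ and $\alpha_2:=m_3$ then produces precisely the table (\ref{eq11}), that is, the family $K_2(\alpha_1,\alpha_2)$.

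The step I expect to be the main obstacle is not any single Leibniz identity, each of which is a short computation, but the interplay between the normalizing change of basis and the module action: the contributions $(X_2,p_+)=X_3$ and $(X_1,p_-)=-X_3$ feed extra $X_3$-terms into $[l',p_+']$ and $[l',p_-']$, and these must be compensated by the $X_3$-parts of the shifts before the clean forms $[l',p_+']=p_+'$ and $[l',p_-']=-p_-'$ can be assumed. Once this is arranged the rest is bookkeeping, and the essential conclusion --- that exactly $c_3$ and $m_3$ cannot be removed --- is forced by $X_3$ being simultaneously the kernel of the $l$-action and the common image of right multiplication by $p_+$ and by $p_-$.
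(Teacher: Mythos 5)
Your proposal is correct and follows essentially the same scheme the paper uses: write the generator products in the general form of (\ref{eq99}) restricted to $X_1,X_2,X_3$, normalize $[l,l]$, $[l,p_+]$, $[l,p_-]$ by a change of basis (correctly noting that the $X_3$-component of $[l,l]$ survives because $X_3$ is annihilated by the right action of $l$), and then impose the Leibniz identity on the same seven triples; the paper itself omits this proof, stating only that it is derived ``in a similar way'' from the case of the module (\ref{eq5}). Your identification of the two irremovable parameters $c_3=\alpha_1$ and $m_3=\alpha_2$ checks out.
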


\begin{teo}\label{theorem5} An arbitrary Leibniz algebra of the family of algebras $K_2(\alpha_1,\alpha_2)$ is isomorphic to one of the following pairwise non-isomorphic algebras:
$$K_2(1,1), \quad K_2(1,0) \quad K_2(0,1) \quad K_2(0,0).$$
\end{teo}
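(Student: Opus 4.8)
The plan is to mirror the proof of Theorem \ref{theorem5} for the family $K_1(\alpha_1,\alpha_2)$, since the module (\ref{eq6}) and its associated family $K_2(\alpha_1,\alpha_2)$ are structurally parallel to (\ref{eq5}) and $K_1$. First I would take the general change of basis on the generators $\{l,p_+,p_-,X_3\}$ of the algebra. Note that here $X_3$ plays the role that $X_1$ did in the $K_1$ case: it is the ``source'' element of the module from which the others are produced, since (\ref{eq11}) shows $X_3=[X_2,p_+]=-[X_1,p_-]$ while $X_1,X_2$ are eigenvectors for $l$. So I would write
\begin{equation*}
\begin{array}{ll}
l'=A_1l+A_2p_++A_3p_-+\sum\limits_{i=1}^3A_{i+3}X_i, & p_+'=B_1l+B_2p_++B_3p_-+\sum\limits_{i=1}^3B_{i+3}X_i,\\[2mm]
p_-'=C_1l+C_2p_++C_3p_-+\sum\limits_{i=1}^3C_{i+3}X_i, & X_3'=\sum\limits_{i=1}^3D_iX_i,
\end{array}
\end{equation*}
and then recover the remaining module basis from $X_1'=-[X_3',\,?]$ and $X_2'=[X_3',\,?]$ in the analogous way, extracting $X_1',X_2'$ as the appropriate linear combinations determined by the action (\ref{eq6}).

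Next I would run the same three-step substitution procedure used in the $K_1$ proof: compute all products $[\ast',\ast']$ by substituting the transformation into the multiplication table (\ref{eq11}), independently express those products assuming the target algebra is $K_2(\alpha_1',\alpha_2')$ in the primed basis, and then match coefficients to read off $\alpha_1'$ and $\alpha_2'$ in terms of $\alpha_1,\alpha_2$ and the transformation parameters. By the symmetry between the two modules I expect to obtain formulas of exactly the same shape as in the $K_1$ case, namely
\begin{equation*}
\alpha_1'=\frac{\alpha_1}{D_3},\qquad \alpha_2'=\frac{B_2C_3\,\alpha_2}{D_3}\quad\text{with}\quad B_2C_3D_3\neq 0,
\end{equation*}
where $D_3$ is the coefficient of $X_3$ in $X_3'$ (the nonvanishing being forced by invertibility of the transformation). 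The precise denominator and the specific product $B_2C_3$ must be confirmed by the computation, but the rescaling structure should be identical.

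With these scaling relations in hand, the conclusion follows by the same case analysis. If $\alpha_2\neq0$, choosing $D_3=B_2C_3\alpha_2$ normalizes $\alpha_2'=1$; then $\alpha_1=0$ yields $K_2(0,1)$, while $\alpha_1\neq0$ can be rescaled via a suitable choice of $B_2$ to give $K_2(1,1)$. If $\alpha_2=0$, then $\alpha_2'=0$ necessarily, and $\alpha_1=0$ gives $K_2(0,0)$ whereas $\alpha_1\neq0$ is normalized to $K_2(1,0)$ by setting $D_3=\alpha_1$. Pairwise non-isomorphism follows since the invariants ``$\alpha_1=0$ versus $\alpha_1\neq0$'' and ``$\alpha_2=0$ versus $\alpha_2\neq0$'' are preserved by the scaling formulas.

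The step I expect to be the main obstacle is the bookkeeping in identifying the correct source element and verifying the exact form of $\alpha_1',\alpha_2'$: because module (\ref{eq6}) assigns the $l$-eigenvalues and the $p_\pm$-actions to $X_1,X_2,X_3$ differently than (\ref{eq5}) does, I must be careful that $X_3$ (not $X_1$) is the generator carried by a single free parameter, and that the induced transformations of $X_1',X_2'$ are consistent with all the constraint equations coming from the Leibniz identities. Any sign or index discrepancy there would alter the denominator or the product appearing in $\alpha_2'$; once the formulas are confirmed to have the stated multiplicative form, the remaining normalization and non-isomorphism argument is routine and identical to the $K_1$ case.
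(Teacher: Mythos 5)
Your overall strategy is the one the paper intends (the paper gives no separate proof for $K_2$, only the remark that the result is derived ``in a similar way'' as for $K_1$): compute how $(\alpha_1,\alpha_2)$ transform under a general change of basis, observe that each parameter rescales independently by a nonzero factor, and normalize. Your final scaling formulas $\alpha_1'=\alpha_1/D_3$ and $\alpha_2'=B_2C_3\alpha_2/D_3$ do have the correct multiplicative shape, so the concluding case analysis and the pairwise non-isomorphism argument are sound.

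However, there is a concrete error in the setup, exactly at the point you flag as the main obstacle, and you resolve it backwards. In (\ref{eq6}) and (\ref{eq11}) the element $X_3$ is the \emph{sink}, not the source: the relations $X_3=[X_2,p_+]=-[X_1,p_-]$ mean $X_3$ is \emph{produced from} $X_1$ and $X_2$, while $X_3$ itself brackets to zero with everything (the action (\ref{eq6}) has $(X_3,l)=(X_3,p_+)=(X_3,p_-)=0$). Consequently $\{l,p_+,p_-,X_3\}$ does not generate the algebra --- the subalgebra it generates is $\langle l,p_+,p_-,X_3\rangle$ and never reaches $X_1,X_2$ --- and your proposed recovery $X_1'=-[X_3',\,?]$, $X_2'=[X_3',\,?]$ is impossible, since no bracket involving $X_3'$ yields anything outside $\langle X_3\rangle$. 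The correct analogue of the $K_1$ argument reverses the roles: take general images $X_1'$ and $X_2'$ (together with $l',p_+',p_-'$) as the generator data, set $X_3':=[X_2',p_+']$, and impose the consistency condition $-[X_1',p_-']=[X_2',p_+']$. Carrying this out makes $D_3$ (the $X_3$-coefficient of $X_3'$) a product of the genuinely free parameters rather than a free parameter itself, but the resulting formulas still allow $\alpha_1$ and $\alpha_2$ to be scaled independently, so the four representatives $K_2(1,1)$, $K_2(1,0)$, $K_2(0,1)$, $K_2(0,0)$ stand. The gap is repairable, but as written the basis-change step would fail.
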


\begin{remark}
Let $H_1,H_2,E_1,E_2,E_{12},F_1,F_2$ and $F_{12}$ be Chevalley basis of $\mathfrak{sl}_3(\mathbb{C})$ defined by
$$aH_1+bH_2+cE_1+dE_2+eE_{12}+fF_1+gF_2+hF_{12}=\left(\begin{matrix}a&c&e&\\f&b-a&d\\h&g&-b\end{matrix}\right).$$

Dougles and Premat \cite{doug} construct indecomposable finite-dimensional representations of $\mathfrak{e}(2)$ by restricting those of $\mathfrak{sl}_3(\mathbb{C})$ to one embedding of $\mathfrak{e}(2)$ in $\mathfrak{sl}_3(\mathbb{C})$ given the next lemma.
\end{remark}

\begin{lemma}\label{lemma1} A map $\varphi:\mathfrak{e}(2)\rightarrow \mathfrak{sl}_3(\mathbb{C})$
defined on the generators of $\mathfrak{e}(2)$   by
$$\varphi(p_+)=E_1, \quad \varphi(p_-)=F_{2}, \quad \varphi(l)=
H_1+H_2,$$ is a   Lie algebra embedding.
\end{lemma}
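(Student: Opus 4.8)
The plan is to verify directly that $\varphi$ preserves the three defining commutation relations of $\mathfrak{e}(2)$, namely
$$[l,p_+]=p_+,\qquad [l,p_-]=-p_-,\qquad [p_+,p_-]=0,$$
where the last relation holds because $p_+=E_{1,3}$ and $p_-=E_{2,3}$ commute in $\mathfrak{e}(2)$ (the bracket $[E_{1,3},E_{2,3}]$ is not among the nonzero relations listed for $\mathfrak{e}(n)$). First I would record the images as explicit $3\times 3$ matrices using the Chevalley identification given in the remark: $\varphi(p_+)=E_1=e_{1,2}$, $\varphi(p_-)=F_2=e_{3,2}$, and $\varphi(l)=H_1+H_2$, which in the parametrization $aH_1+bH_2$ with $a=b=1$ is the diagonal matrix $\mathrm{diag}(1,0,-1)$.

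Next I would compute the three matrix brackets. For the first relation, $[\,\mathrm{diag}(1,0,-1),\,e_{1,2}\,]$ acts on $e_{1,2}$ by the difference of the $(1,1)$ and $(2,2)$ diagonal entries, giving $(1-0)e_{1,2}=E_1=\varphi(p_+)$, as required. For the second, $[\,\mathrm{diag}(1,0,-1),\,e_{3,2}\,]$ scales $e_{3,2}$ by the difference of the $(3,3)$ and $(2,2)$ entries, namely $(-1-0)e_{3,2}=-F_2=-\varphi(p_-)$, matching $[l,p_-]=-p_-$. For the third, I would check that $E_1=e_{1,2}$ and $F_2=e_{3,2}$ have disjoint supports in the sense that $e_{1,2}e_{3,2}=0$ and $e_{3,2}e_{1,2}=0$, so their commutator vanishes, in agreement with $[p_+,p_-]=0$. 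Since these are exactly the relations defining $\mathfrak{e}(2)$ on its generators, $\varphi$ is a Lie algebra homomorphism.

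Finally, to conclude that $\varphi$ is an \emph{embedding} I would check injectivity. As $\mathfrak{e}(2)$ is three-dimensional with basis $\{l,p_+,p_-\}$, it suffices to observe that the images $H_1+H_2=\mathrm{diag}(1,0,-1)$, $E_1=e_{1,2}$, and $F_2=e_{3,2}$ are linearly independent in $\mathfrak{sl}_3(\mathbb{C})$, which is immediate since they are supported on the diagonal, the strict upper-triangular $(1,2)$ entry, and the strict lower-triangular $(3,2)$ entry respectively. Hence $\varphi$ has trivial kernel and is an embedding.

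There is essentially no serious obstacle here: the only point requiring care is the correct identification of $H_1+H_2$ as a diagonal matrix from the given Chevalley parametrization and the bookkeeping of which diagonal entries govern the scaling of $E_1$ and $F_2$ under bracketing. Once the matrices are written out explicitly, all three relations and the injectivity check follow from elementary matrix unit computations.
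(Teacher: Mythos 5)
Your verification is correct: with the paper's Chevalley parametrization one indeed has $H_1+H_2=\mathrm{diag}(1,0,-1)$, $E_1=e_{1,2}$, $F_2=e_{3,2}$, and the matrix-unit computations give $[\varphi(l),\varphi(p_+)]=\varphi(p_+)$, $[\varphi(l),\varphi(p_-)]=-\varphi(p_-)$, $[\varphi(p_+),\varphi(p_-)]=0$, which are exactly the structure constants of $\mathfrak{e}(2)$ used throughout the paper (and confirmed by Theorems \ref{teo1}--\ref{teo3}); linear independence of the three images then gives injectivity. Note that the paper itself offers no proof of Lemma \ref{lemma1} --- it is imported verbatim from Douglas and Premat \cite{doug} --- so there is nothing to compare against; your direct check is a complete, self-contained substitute. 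The one point worth tightening is your justification of $[p_+,p_-]=0$: you appeal to the relation table for general $\mathfrak{e}(n)$ with $p_\pm$ written as $E_{1,3},E_{2,3}$, but in the matrix realization of $\mathfrak{e}(2)$ these are the translation generators $H_1,H_2$ (i.e.\ $e_{1,3},e_{2,3}$), not rotation generators, and their commuting is what one should cite (equivalently, it is forced by any of the explicit embeddings in Theorems \ref{teo1}--\ref{teo3}). This does not affect the validity of the argument.
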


We construct a module $V$ by action $V \times  \mathfrak{e}(2)  \rightarrow V$ defined by linear transformations with the matrices $\{\varphi(p_+),\varphi(p_-),\varphi(l)\}$ on the linear space $V=\{X_1, X_2, X_3\}$. Then we obtain
\begin{equation} \label{9}\left\{\begin{array}{ll}
(X_1,p_+)= X_2,    & (X_3,p_-)= X_2,      \\[1mm]
(X_1,l)=X_1,  & (X_3,l)=-X_3.
\end{array}\right.\end{equation}

\begin{remark}
If we consider the following change of basis elements
\begin{eqnarray*}
l'=-l, \ p_+'=p_-, \ p_-'=p_+, \ \
X_1'= X_1, \ X_2'= -X_3, \ X_3'= -X_2,
\end{eqnarray*} then the action \eqref{9} transforms to the action (\ref{eq6}). Hence the description of Leibniz algebras constructed by $\mathfrak{e}(2)$-module \eqref{9} are already obtained in  Theorem \ref{theorem5}. \end{remark}

\subsection{Leibniz algebras associated with representation of Euclidean Lie algebra $\mathfrak{e}(2)$ considered as a subalgebra of $\mathfrak{sp}_4(\mathbb{C})$ }

In this section we describe the Leibniz algebras $L$ such that $L/ I \cong \mathfrak{e}(2)$ and the ideal $I$ is a right $\mathfrak{e}(2)$-module with action either \eqref{eq7} or \eqref{eq8}.

\begin{teo} \label{thm7} Let $L$ be a Leibniz algebra with associated Lie algebra $\mathfrak{e}(2)$ and the ideal $I$ be a right $\mathfrak{e}(2)$-module defined by (\ref{eq7}). Then there exists a basis $\{ l,p_+,p_-, X_1, X_2,X_3,X_4\}$ of $L$ such that its table of multiplications has the following form:
$$\left\{\begin{array}{lll}
[l,p_+]=p_+,    &[p_+,l]=-p_+, \\[1mm]
[l,p_-]=-p_-, &[p_-,l]=p_-, \\[1mm]
[X_1,p_+]= X_3,   &  [X_1,p_-]=-X_4, \\[1mm] [X_2,p_{-}]=-X_3, &
[X_1,l]=\frac{1}{2}X_1,    \\[1mm] [X_2,l]=-\frac{3}{2}X_2,  & [X_3,l]=  -\frac{1}{2} X_3 \\[1mm] [X_4,l]=  \frac{3}{2} X_4.
\end{array}\right.$$
\end{teo}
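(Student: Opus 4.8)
The plan is to follow exactly the same scheme that worked for the earlier theorems: first write down the most general Leibniz algebra whose liezation is $\mathfrak{e}(2)$ and whose ideal $I$ carries the module structure \eqref{eq7}, then normalize the ``extra'' terms by a change of the generator basis, and finally pin down the remaining coefficients using the Leibniz identity. Concretely, I would introduce unknowns for the products of the Lie part into $I$, writing $[l,p_+]=p_++\sum_i f_i X_i$, $[p_+,l]=-p_++\sum_i a_i X_i$, $[l,p_-]=-p_-+\sum_i g_i X_i$, $[p_-,l]=p_-+\sum_i b_i X_i$, together with the squares $[l,l]=\sum_i c_i X_i$, $[p_+,p_+]=\sum_i d_iX_i$, $[p_-,p_-]=\sum_i e_iX_i$, $[p_+,p_-]=\sum_i m_iX_i$ and $[p_-,p_+]=\sum_i n_iX_i$, entirely parallel to \eqref{eq99}. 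The module action \eqref{eq7} fixes all products of the form $[X_j,\cdot]$, and $[L,I]=0$ kills every product with $I$ on the right.

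Next I would perform the normalizing change of basis on $\{l,p_+,p_-\}$, choosing the $X_i$-corrections so that $[l,l]=0$ and $[l,p_+]=p_+$, $[l,p_-]=-p_-$ exactly. The correction for $l$ should subtract off $\sum_i c_iX_i$ suitably weighted by the eigenvalues $\tfrac12,-\tfrac32,-\tfrac12,\tfrac32$ appearing in the $l$-action of \eqref{eq7}, while the corrections for $p_+,p_-$ should absorb the $f_i,g_i$ terms (using the action of $p_+,p_-$ on the $X_i$ to realize them as coboundaries). This is the step where \eqref{eq7} differs from the $\mathfrak{sl}_2\oplus\mathfrak{sl}_2$ case: because the weights here are $\pm\tfrac12,\pm\tfrac32$ and the raising actions send $X_1\mapsto X_3$, $X_1\mapsto X_4$, $X_2\mapsto X_3$ rather than forming clean two-dimensional blocks, I expect some $f_i,g_i,c_i$ to be \emph{non}-removable a priori, so I would only claim the normalization $[l,l]=0$, $[l,p_\pm]=\pm p_\pm$ and carry the surviving constants forward.

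I would then impose the Leibniz identity on the triples $(l,p_+,p_-)$, $(l,p_+,l)$, $(l,p_-,l)$, $(p_+,l,p_+)$, $(p_-,l,p_-)$, $(p_+,l,p_-)$ and $(p_-,l,p_+)$, reading off each resulting scalar equation by comparing the $X_i$-coefficients. Each such triple, after applying $[[x,y],z]=[[x,z],y]+[x,[y,z]]$ and using the fixed $l$-eigenvalues from \eqref{eq7}, produces linear relations among $a_i,b_i,d_i,e_i,m_i,n_i$ and the surviving $f_i,g_i$; the ones of the form (eigenvalue difference)$\cdot(\text{coefficient})=(\text{another coefficient})$ will force most coefficients to vanish, exactly as the displayed system in the first theorem forced $d_1=d_3=d_4=0$, etc. The main obstacle I anticipate is bookkeeping rather than conceptual: the action \eqref{eq7} is \emph{not} multiplicity-free in a way that cleanly separates variables (the weight $-\tfrac12$ occurs for $X_1$ and $X_3$, the weight $\tfrac32$ for $X_4$, etc.), so I must verify that every $f_i,g_i,a_i,b_i,d_i,e_i,m_i,n_i$ is killed and that no genuine one-parameter family survives. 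I would check this by confirming that the system of Leibniz constraints, together with the freedom still available in the generator change of basis, has only the trivial solution in the $I$-directions; if a parameter did survive it would appear as a family $K(\alpha)$ as in the $\mathfrak{sl}_3$ case, so the content of the theorem is precisely that here it does not, and the final multiplication table is rigid.
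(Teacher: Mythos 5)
Your proposal follows essentially the same route as the paper's proof: the same generic ansatz parallel to \eqref{eq99}, the same normalization of $\{l,p_+,p_-\}$ (with the $l$-correction weighted by the inverses of the eigenvalues $\tfrac12,-\tfrac32,-\tfrac12,\tfrac32$ and the $f_i,g_i$ absorbed into $p_\pm'$ so that $[l,l]=0$, $[l,p_\pm]=\pm p_\pm$), and the same seven Leibniz-identity triples $(l,p_+,p_-)$, $(l,p_\pm,l)$, $(p_\pm,l,p_\pm)$, $(p_\pm,l,p_\mp)$ to force the remaining coefficients to vanish. The bookkeeping you flag as the remaining work is exactly what the paper carries out, and it does close up with the rigid table claimed.
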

\begin{proof} Here for the products of the elements $\{p_+,p_-,l\}$ we  use notations of \eqref{eq99}.

Consider the change of the basis elements $l, p_+,p_-$ in the following way
$$\begin{array}{lll}
l'= l -2c_1X_1 +\frac{2}{3}c_2 X_2 +2c_3 X_3-\frac{2}{3}c_4X_4, \\[1mm]
p_+'=p_+ -2c_1X_3+ \sum\limits_{i=1}^4f_iX_i, \\[1mm]
p_-'=p_- - 2c_1X_4 +\frac{2}{3}c_2X_3 - \sum\limits_{i=1}^4g_iX_i.
\end{array}$$

Then we can assume
$$ [l,l]=0, \ \ [l,p_+]=p_+, \ \ [l,p_-]=-p_-.$$

From the following Leibniz identities we derive
$$\left\{\begin{array}{lll}
(l',p_+',p_-'), & \Rightarrow & m_1=-n_1, \ m_2=-n_2, \ m_3=-n_3 +g_1 + f_2, \ m_4=-n_4  + f_1,   \\[1mm]
(l',p_+',l'), & \Rightarrow & a_1=-\frac{3}{2}f_1, a_2=\frac{1}{2}f_2, a_3=- \frac{1}{2}f_3 +c_1, a_4=-\frac{5}{2}f_4, \\[1mm]
(l',p_-',l'), & \Rightarrow & b_1=-\frac{1}{2}g_1, \ b_2=- \frac{5}{2}g_2, \ b_3= -\frac{3}{2}g_3 + c_2, b_4= \frac{1}{2}g_4 + c_1, \\[1mm]
(p_+',l',p_+'), & \Rightarrow & d_1=d_2=d_4=0, \ d_3=\frac{2}{3}a_1, \\[1mm]
(p_-',l',p_-'), & \Rightarrow & e_1=e_2=0, e_3=\frac{2}{5}b_1,   e_4=2b_1, \\[1mm]
(p_+',l',p_-'), & \Rightarrow & m_1=m_2=0, m_3=2a_2,   m_4=\frac{2}{3}a_1, \\[1mm]
(p_-',l',p_+'), & \Rightarrow & n_1=n_2=n_4=0, n_3=-2b_1. \\[1mm]
\end{array}\right.$$

Hence we get
$$[l',p_+']=p_+', \quad [p_+',l']=-p_+', \quad [l',p_-']=-p_-', \quad [p_-',l']=p_-'.$$
\end{proof}

\begin{teo} \label{thm12} Let $L$ be a Leibniz algebra with associated Lie algebra
$\mathfrak{e}(2)$ and the ideal $I$ be a right $\mathfrak{e}(2)$-module defined by (\ref{eq8}).
Then there exists a basis $\{ l,p_+,p_-, X_1, X_2, X_3, X_4\}$ of $L$ such that its table of multiplications has the following form:
$$\left\{\begin{array}{lll}
[l,p_+]=p_+,    &[p_+,l]=-p_+, \\[1mm]
[l,p_-]=-p_-, &[p_-,l]=p_-, \\[1mm]
[X_2,p_+]= X_4,     & [X_1,p_-]= X_3, \\[1mm]
[X_1,l]=-\frac{1}{2}X_1-\beta X_4,  & [X_2,l]=\frac{1}{2}X_2-\beta X_3, \\[1mm] [X_3,l]=\frac{1}{2}  X_3, & [X_4,l]= -\frac{1}{2}X_4.
\end{array}\right.$$
\end{teo}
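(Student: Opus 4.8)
The plan is to mirror the normalization used in Theorem~\ref{thm7}. I first record the nine undetermined products $[l,l]$, $[l,p_+]$, $[p_+,l]$, $[l,p_-]$, $[p_-,l]$, $[p_+,p_+]$, $[p_-,p_-]$, $[p_+,p_-]$, $[p_-,p_+]$ in the notation of \eqref{eq99}, each written as its $\mathfrak{e}(2)$-part plus a correction $\sum_{i=1}^4(\cdot)_iX_i$ lying in $I$. Two structural facts drive the argument: $[L,I]=0$ and $[I,I]=0$. Consequently, replacing any of $l,p_+,p_-$ by itself plus a vector $v\in I$ leaves a bracket $[x,y]$ unchanged except through the module action $(v,y)$ in the left slot. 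The decisive point is that $l$ acts on $I$ via the map with columns $(X_i,l)$ from \eqref{eq8}; in the order $X_3,X_4,X_1,X_2$ this map is triangular with diagonal $\frac12,-\frac12,-\frac12,\frac12$, so its eigenvalues are $\pm\frac12$ and its determinant is $\frac1{16}\neq0$. The off-diagonal terms $X_1\mapsto-\beta X_4$ and $X_2\mapsto-\beta X_3$ do not affect this, so $l$ acts invertibly on $I$, which is exactly what makes the normalization possible.

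I then carry out the normalization in three moves. To kill $[l,l]=\sum c_iX_i$ I set $l'=l+v$ and solve $(v,l)=-\sum c_iX_i$; invertibility gives the unique solution $v=2c_1X_1-2c_2X_2-(2c_3+4\beta c_2)X_3+(2c_4-4\beta c_1)X_4$, so that $[l',l']=0$. Keeping $l'$ fixed, I absorb the remaining corrections in $[l',p_+]$ and $[l',p_-]$ (which now also contain the module terms $(v,p_+)=-2c_2X_4$ and $(v,p_-)=2c_1X_3$) into $p_+$ and $p_-$: since $[l',\cdot]$ annihilates $I$, setting $p_+'$ and $p_-'$ equal to $p_+$ and $p_-$ plus the appropriate elements of $I$ produces $[l',p_+']=p_+'$ and $[l',p_-']=-p_-'$. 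Renaming the primed basis, I have reduced to $[l,l]=0$, $[l,p_+]=p_+$, $[l,p_-]=-p_-$, with the six products $[p_\pm,l]$, $[p_+,p_+]$, $[p_-,p_-]$, $[p_+,p_-]$, $[p_-,p_+]$ still carrying corrections.

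The last step feeds the seven triples $(l,p_+,l)$, $(l,p_-,l)$, $(p_+,l,p_+)$, $(p_-,l,p_-)$, $(p_+,l,p_-)$, $(p_-,l,p_+)$, $(l,p_+,p_-)$ into the Leibniz identity. The first two immediately give $[p_+,l]=-p_+$ and $[p_-,l]=p_-$, because $[l,\sum a_iX_i]=0$ and $[l,\sum b_iX_i]=0$. Each remaining identity then splits coordinatewise into equations in which a nonzero scalar multiplies a single correction coefficient; for example $(p_+,l,p_+)$ produces $-\frac32 d_1=0$, $-\frac52 d_2=0$, $-\frac52 d_3=0$, $-\frac32 d_4=0$, forcing $[p_+,p_+]=0$, and the same mechanism yields $[p_-,p_-]=[p_+,p_-]=[p_-,p_+]=0$, while $(l,p_+,p_-)$ only reconfirms $m_i=-n_i$. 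Because every eigenvalue of $l$ on $I$ is nonzero, none of these scalar factors vanishes, so all corrections must be zero. Combining the normalized brackets with the fixed action \eqref{eq8} gives exactly the asserted table.

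I expect the only real friction to be bookkeeping: tracking how the $\beta$-terms propagate through $(v,l)$, $(v,p_\pm)$ and through each Leibniz identity. There is no genuine obstruction, since $\beta$ feeds $X_1$ only into $X_4$ and $X_2$ only into $X_3$ and never touches the nonzero diagonal eigenvalues of $l$; for the same reason $\beta$ is never eliminated and persists as a free parameter of the algebra, consistent with the equivalence $\psi_{3,\alpha}\sim\psi_{3,\beta}$ iff $\alpha^2=\beta^2$ in Theorem~\ref{teo3}.
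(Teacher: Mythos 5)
Your proposal is correct and follows essentially the same route as the paper: the same normalizing substitution (your explicit $v$ for $l'$ and the corrections $-2c_2X_4$, $-2c_1X_3$ for $p_\pm'$ coincide with the paper's change of basis) followed by the same seven Leibniz triples. The only differences are expository --- you justify the normalization via the invertibility of the action of $l$ on $I$ and establish $[p_\pm,l]=\mp p_\pm$ first so the remaining identities decouple, whereas the paper records the coupled relations directly --- which does not change the substance of the argument.
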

\begin{proof} Similarly as in the proof of Theorem \ref{thm7} considering the change of elements $\{l, p_+, p_-\}$ as follows
$$l'= l +2c_1X_1 -4c_1 \beta X_4 -2c_2 X_2 -4c_2\beta  X_3 -2c_3X_3 + 2c_4X_4,$$
$$p_+'=p_+ -2c_2X_4+ \sum\limits_{i=1}^3f_iX_i, \quad p_-' =p_- -2c_1X_3 - \sum\limits_{i=1}^3g_iX_i,$$
yields
$$[l,l]=0, \quad [l,p_+]=p_+, \quad [l,p_-]=-p_-.
$$

The proof of the theorem completes the following verifications of Leibniz identities
$$\left\{\begin{array}{lll}
(l',p_+',p_-'), & \Rightarrow & m_1=-n_1, \ m_2=-n_2, \ m_3=-n_3 - f_1, m_4=-n_4 +g_2,  \\[1mm]
(l',p_+',l'), & \Rightarrow & a_1=-2f_1, a_2=-\frac{3}{2}f_2, a_3=(-\frac{3}{2}+\beta)f_3, a_4= -\frac{1}{2}f_4+\beta f_1 +c_2, \\[1mm]
(l',p_-',l'), & \Rightarrow & b_1=-\frac{3}{2}g_1, \ b_2=-\frac{1}{2}g_2, \ b_3=-\frac{1}{2}g_3-c_1-\beta g_2, b_4=-\frac{3}{2}g_4 -\beta g_1,\\[1mm]
(p_+',l',p_+'), & \Rightarrow & d_1=d_2=d_3=0, \ d_4=\frac{2}{3} a_2, \\[1mm]
(p_-',l',p_-'), & \Rightarrow & e_1=e_2=e_4=0, e_3=-\frac{2}{3}b_1,  \\[1mm]
(p_+',l',p_-'), & \Rightarrow & m_1=m_2=m_4=0, m_3=-\frac{1}{2}a_1,  \\[1mm]
(p_-',l',p_+'), & \Rightarrow & n_1=n_2=n_3=0, n_4=-2b_2. \\[1mm]
\end{array}\right.$$
\end{proof}

\subsection{Leibniz algebras associated with representation of Euclidean Lie algebra $\mathfrak{e(n)}$ realized by its matrix realization.} \

In order to distinguish the quotient Lie algebra $L/I$ and its preimage under natural homomorphism, for the quotient algebra we shall use notation with a line at the top.

In this subsection we describe Leibniz algebras $L$ such that $L/I\cong {\mathfrak{\overline{e}(n)}}$ (here the quotient algebra  ${\mathfrak{\overline{e}(n)}}$ means the algebra ${\mathfrak{{e}(n)}}$) and the ideal $I$ is an $(n+1)$-dimensional  right ${\mathfrak{\overline{e}(n)}}$-module with a basis $\{X_1, \dots, X_{n+1}\}$, which defined by transformations of matrix realization of ${\mathfrak{\overline{e}(n)}}$:

\begin{equation} \label{eq1111}
[X_i,\overline{E}_{i,j}]=X_{j},\ 1\leq i,j\leq n,\quad [X_i,\overline{H}_{i}]=X_{n+1},\ \ 1\leq i\leq n.
\end{equation}

It should be noted that ${\mathfrak{\overline{e}(n)}}\cong \mathfrak{so}_{n-1}\dot{+} \mathbb{C}^n$, where $\mathfrak{so}_{n-1}$ is orthogonal simple Lie algebra \cite{dougnew1} with the basis $\overline{E}_{i,j}, i < j$.

Thus we have an algebra $L\cong (\mathfrak{so}_{n-1}\dot{+} \mathbb{C}^n) + I$ with a basis $\{E_{i,j}, i < j, H_{k}, X_1, \dots, X_{n+1}\}$, where elements $E_{i,j}, H_{k}$ are pre-image of corresponding elements of the quotient algebra $L/I$. Due to Levi's theorem \cite{Bar1} we conclude that
$\mathfrak{so}_{n-1}$ is a subalgebra of $L$, that is, $[E_{i,j}, E_{j,k}]=E_{i,k}$.

\begin{teo} Let $L$ be a Leibniz algebra such that $L/I \cong \mathfrak{\overline{e}(n)}$ and the ideal $I$ is a right $\mathfrak{\overline{e}(n)}$-module defined by \eqref{eq1111}. Then $[\mathfrak{e(n)}, \mathfrak{e(n)}]= \mathfrak{e(n)}.$
\end{teo}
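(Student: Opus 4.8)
The plan is to prove that the Euclidean Lie algebra $\mathfrak{e}(n)$ (with $n\geq 3$, as throughout this subsection) is perfect, i.e.\ that each of its spanning generators lies in the derived ideal $[\mathfrak{e}(n),\mathfrak{e}(n)]$. Although the hypotheses are phrased in terms of $L$, $I$ and the module structure, the assertion is intrinsic to the Lie algebra $\mathfrak{e}(n)$, so it suffices to express every $E_{i,j}$ and every $H_k$ as a commutator of two elements of $\mathfrak{e}(n)$; once all generators are reached, the inclusion of ideals $[\mathfrak{e}(n),\mathfrak{e}(n)]\subseteq\mathfrak{e}(n)$ upgrades to equality.

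First I would recover the rotation generators $E_{i,k}$. This is precisely the step where the hypothesis $n\geq 3$ enters: for any pair of distinct indices $i,k$, the condition $n\geq 3$ provides a third index $j\notin\{i,k\}$, and then the defining relation $[E_{i,j},E_{j,k}]=E_{i,k}$ (valid for all distinct triples under the convention $E_{i,j}=-E_{j,i}$) exhibits $E_{i,k}$ as a commutator. Hence every $E_{i,j}$ lies in $[\mathfrak{e}(n),\mathfrak{e}(n)]$; equivalently, the semisimple Levi part spanned by the $E_{i,j}$ is perfect, which is the content actually being invoked here.

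Next I would recover the translation generators: for each $i$ choose any index $j\neq i$, and the relation $[E_{i,j},H_j]=H_i$ shows that $H_i\in[\mathfrak{e}(n),\mathfrak{e}(n)]$. Combining the two steps, every element of the spanning set $\{E_{i,j}\mid i<j\}\cup\{H_k\}$ of $\mathfrak{e}(n)$ is a commutator, whence $[\mathfrak{e}(n),\mathfrak{e}(n)]=\mathfrak{e}(n)$.

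I do not anticipate a genuine obstacle; the argument is a direct reading of the structure constants, and the only delicate point is the indispensability of $n\geq 3$. Indeed, for $n=2$ the rotation part reduces to the single element $l$ with $[l,l]=0$, so $l$ cannot be produced as a commutator and $\mathfrak{e}(2)$ is not perfect — consistent with the separate treatment of $\mathfrak{e}(2)$ carried out earlier in the paper. Thus the proof is essentially a bookkeeping verification that the commutation relations of $\mathfrak{e}(n)$ already reach each generator, with $n\geq 3$ the sole nontrivial hypothesis.
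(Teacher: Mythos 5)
There is a genuine gap: you have proved a different (and much weaker) statement than the one the paper intends. In this subsection the paper explicitly reserves the barred symbol $\overline{\mathfrak{e}(n)}$ for the quotient Lie algebra $L/I$ and the unbarred $\mathfrak{e}(n)$ for the subspace of $L$ spanned by the chosen preimages $E_{i,j}$, $H_k$. A priori the brackets of these preimages are determined only modulo $I$: one must allow $[E_{i,j},H_i]=-H_j+\sum_{t}A_{i,j,i}^tX_t$, $[H_i,H_j]=\sum_{t}C_{i,j}^tX_t$, and so on. The content of the theorem is that (after a suitable re-choice of the $H_k$) all of these $I$-components vanish, i.e. $[\mathfrak{e}(n),\mathfrak{e}(n)]\subseteq\mathfrak{e}(n)$, so that the preimage subspace is an honest subalgebra and $L$ splits as $\mathfrak{e}(n)\oplus I$. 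This is exactly the inclusion your argument never addresses: working only with the structure constants of the abstract Euclidean algebra, you show each generator is reached by a bracket, which at the level of $L$ only gives $[\mathfrak{e}(n),\mathfrak{e}(n)]+I=\mathfrak{e}(n)+I$, i.e. surjectivity onto the quotient. That surjectivity (perfectness of $\overline{\mathfrak{e}(n)}$ for $n\ge 3$) is the easy half and is essentially taken for granted by the paper.

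The hard half, which occupies the paper's entire proof, is a long sequence of applications of the Leibniz identity to triples such as $\{E_{j,i},E_{1,i},H_1\}$, $\{E_{i,j},H_i,E_{i,j}\}$, $\{E_{i,k},H_i,H_j\}$, combined with the specific module action \eqref{eq1111} and a preliminary change of basis $H_1'=H_1+\sum_{t}A_{1,2,2}^tX_t$, $H_j'=H_j-\sum_{t}A_{1,j,1}^tX_t$, all aimed at forcing the coefficients $A_{i,j,k}^t$, $B_{k,i,j}^t$, $C_{i,j}^t$ to zero; the corresponding fact for the brackets of the $E_{i,j}$ among themselves is obtained by citing Levi's theorem for Leibniz algebras. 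Note that this part genuinely uses the hypothesis on $I$: for other modules the extension need not split, which is precisely why the earlier sections of the paper obtain families such as $K_1(\alpha_1,\alpha_2)$ with nontrivial $I$-valued products among the preimages. Your observation about the role of $n\ge 3$ is correct but pertains only to the surjectivity half, not to the vanishing of the $I$-components that the theorem is actually about.
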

\begin{proof} We set
$$\begin{array}{ll}
[E_{i,j},H_{i}]=-H_j+\sum\limits_{t=1}^{n+1}A_{i,j,i}^tX_t,&[H_{i},E_{i,j}]=H_j+\sum\limits_{t=1}^{n+1}B_{i,i,j}^tX_t,\\[1mm]
[E_{i,j},H_{j}]=H_i+\sum\limits_{t=1}^{n+1}A_{i,j,j}^tX_t,&[H_{j},E_{i,j}]=-H_i+\sum\limits_{t=1}^{n+1}B_{j,i,j}^tX_t,\\[1mm]
[E_{i,j},H_{k}]=\sum\limits_{t=1}^{n+1}A_{i,j,k}^tX_t,\ k\notin\{i,j\},&[H_{k},E_{i,j}]=\sum\limits_{t=1}^{n+1}B_{k,i,j}^tX_t,\ k\notin\{i,j\},\\[1mm]
[H_{i},H_{j}]=\sum\limits_{t=1}^{n+1}C_{i,j}^tX_t.\\[1mm]
\end{array}$$

Taking the change
$$H_{1}^\prime=H_1+\sum\limits_{t=1}^{n+1}A_{1,2,2}^tX_t,\quad H_{j}^\prime=H_j-\sum\limits_{t=1}^{n+1}A_{1,j,1}^tX_t, \ 2\leq j\leq n,$$
we can assume that $$[E_{1,2},H_{2}]=H_1,\quad [E_{1,j},H_{1}]=-H_j, \
2\leq j\leq n,$$

For $2\leq i\neq j \leq n$ we consider the Leibniz identity
$$[E_{j,i},[E_{1,i},H_{1}]]=[[E_{j,i},E_{1,i}],H_1]-[[E_{j,i},H_1],E_{1,i}]=-H_j-A_{j,i,1}^1X_i+A_{j,i,1}^iX_1.$$

On the other hand we have
$$[E_{j,i},[E_{1,i},H_{1}]]=-[E_{j,i},H_i]=[E_{i,j},H_i].$$

Consequently,
\begin{equation} \label{e(n)eq1}
[E_{i,j},H_i]=-H_j+A_{j,i,1}^iX_1-A_{j,i,1}^1X_i, \ 2\leq i\neq j \leq n.
\end{equation}

Similarly, we obtain
\begin{equation} \label{e(n)eq2}
[E_{k,l},H_i]=-A_{k,l,1}^iX_1+A_{k,l,1}^1X_i, 2\leq i, k,l \leq n, \ i\notin \{k, l\}.
\end{equation}

Applying Equality (\ref{e(n)eq1}) and taking into account that $A_{i,j,1}^1=-A_{j,i,1}^1$ in
the Leibniz identity for the triples of elements
$$\begin{array}{lll}
\{E_{i,j},H_i,E_{i,j}\}, & 2\leq i\neq j \leq n,\\[1mm]
\{E_{i,j},H_i,E_{k,l}\}, & 1\leq i\neq j \leq n,  \ \ 2\leq k\neq l \leq n, & \{i,j\}\cap\{k,l\}=\{0\},\\[1mm]
\end{array}
$$
we derive
\begin{equation}\label{e(n)eq3}\begin{array}{lll}
[H_j,E_{i,j}]=-H_i+A_{i,j,1}^jX_1, &  2\leq i\neq j \leq n,\\[1mm]
[H_j,E_{k,l}]=0, & 1\leq j \leq n, \ \  2\leq k\neq l \leq n, \ \  j\notin \{k,l\}.\\[1mm]
\end{array}
\end{equation}

Using (\ref{e(n)eq1})-(\ref{e(n)eq2}) in the Leibniz identity for the elements $\{E_{i,j},H_k,E_{k,l}\}$ we conclude $$A_{i,j,1}^1=0,  \quad 2 \leq i \neq j \leq n \Rightarrow
[E_{k,l},H_i]=-A_{k,l,1}^iX_1, \quad 2\leq i, k,l \leq n, \ i\notin \{k, l\}.$$

Analogously, from \ref{e(n)eq2} - (\ref{e(n)eq3}) and Leibniz identity for the triples $\{E_{1,2},E_{i,2},H_i\}, \{E_{i,j}, E_{1,2}, H_{2}\}, \ 3\leq i, j  \leq n$ we derive
$$[E_{i,1},H_i]=-H_1+A_{1,2,i}^iX_2-A_{1,2,i}^2X_i,\quad [E_{i,j},H_1]=A_{i,j,1}^2X_2, \ \ 3\leq i,j \leq n.$$

The chain of equalities
$$0=[E_{i,j},[H_{1},H_{2}]]=[[E_{i,j},H_{1}],H_2]-[[E_{i,j},H_2],H_{1}]=2A_{i,j,1}^2X_2$$
imply
$$A_{i,j,1}^k=0,\ \ 1\leq k\leq n, \ 3\leq i,j \leq n \Rightarrow [E_{i,j},H_1]=0.$$

Thus, we get
$$\begin{array}{lll}
[E_{i,j},H_i]=-H_j, &  3\leq i\neq j \leq n,\\[1mm]
$$[H_j,E_{i,j}]=-H_i, & 3\leq i\neq j \leq n,\\[1mm]
$$[H_j,E_{k,l}]=0, & 1\leq j \leq n, \ \  2\leq k\neq l \leq n, \ \  j\notin \{k,l\}.\\[1mm]
\end{array}$$

For $3\leq i,j,k\leq n$ we consider the Leibniz identity
$$[E_{i,1},[E_{j,k},H_j]]=[[E_{i,1},E_{j,k}],H_j]-[[E_{i,1},H_j],E_{j,k}]=A_{i,1,j}^jX_k-A_{i,1,j}^kX_j.$$

On the other hand, we have
$$[E_{i,1},[E_{j,k},H_j]]=-[E_{i,1},H_k].$$

Therefore, we conclude
\begin{equation}\label{e(n)eq4}
[E_{i,1},H_k]=A_{i,1,j}^kX_j-A_{i,1,j}^jX_k.
\end{equation}

Since in the right side of \eqref{e(n)eq4} there is a free parameter $j$ and on the left side of \eqref{e(n)eq4} we there is not it follows that $[E_{i,1},H_k]=0.$

Considering the Leibniz identity for the following triples we deduce
$$\begin{array}{lllll}
\{E_{j,i}, H_j, E_{1,k}\}, & 3\leq i, j, k\leq n  & \Rightarrow & [H_j,E_{1,i}]=0, & 3\leq i, j\leq n, \\[1mm]
\{E_{2,j}, H_{2}, E_{2,j}\}, & 3\leq j\leq n  & \Rightarrow & [H_j,E_{2,j}]=-H_2, & 3\leq j\leq n,\\[1mm]
\{E_{i,1}, H_{i}, E_{i,j}\}, & 3\leq i\neq j \leq n & \Rightarrow & A_{1,2,i}^2=0, & 3\leq i \leq n, \\[1mm]
\{E_{2,j}, H_2, E_{1,k}\}, & 3\leq j, k\leq n & \Rightarrow & A_{j,2,1}^2=0, & 3\leq j\leq n,\\[1mm]
\{E_{i,j},E_{j,2},H_1\}, & 3\leq i \neq j \leq n & \Rightarrow & [E_{i,2},H_1]=0, & 3\leq i \leq n,\\[1mm]
\{E_{i,j},E_{j,1},H_2\}, & 3\leq i \neq j \leq n &\Rightarrow & [E_{i,1},H_2]=0, & 3\leq i \leq n, \\[1mm]
\{E_{i,1}, E_{1,2},H_1\}, &  3\leq i\leq n & \Rightarrow & [H_i,E_{1,2}]=0, & 3\leq i \leq n,\\[1mm]
\{E_{i,1},H_i,E_{2,j}\}, & 3\leq i \neq j \leq n & \Rightarrow & [H_1,E_{2,j}]=A_{1,2,i}^iX_j, & 3\leq i \neq j \leq n, \\[1mm]

\end{array}$$

Consider

\begin{equation}\label{e(n)eq5}
\{E_{1,2}, H_{i}, E_{i,j}\}, \ 3\leq i \neq j \leq n \Rightarrow [E_{1,2},H_j]=A_{1,2,i}^iX_j-A_{1,2,i}^jX_i, \ \ 3\leq i \neq j \leq n.
\end{equation}

Using \eqref{e(n)eq5} in the Leibniz identities for the triples
$$\{E_{1,2},H_j,E_{i,l}\}, \quad \{E_{1,3},E_{3,2},H_i\}, \ 3\leq i,j \leq n,$$
we derive
$$A_{1,2,i}^i=A_{1,2,i}^j=0, \ 3\leq i,j \leq n.$$

%

Applying Leibniz identity for the triples
$$\{E_{i,2},H_j,E_{2,j}\}, \quad \{E_{i,1},H_i, E_{1,j}\}, \quad \{E_{k,i},H_k,E_{i,2}\}, \quad \{E_{k,i},H_k,E_{i,1}\}$$ and analyzing the obtained relations we deduce $[H_{i},E_{i,j}]=H_j, \ 1\leq i\neq j \leq n.$

Now we  prove the nullity of the rest products, namely, $[H_i,H_j]=0.$

From Leibniz identities we have
$$\begin{array}{llll}
\{E_{i,k}, H_i, H_{j}\}, \ 1\leq i\neq j \leq n & \Rightarrow & [H_i,H_j]=0, & 1\leq i\neq j \leq n, \\[1mm]
\{H_i, E_{i,j}, H_{j}\}, \ 1\leq i \neq j\leq n &\Rightarrow & [H_1,H_1]=[H_i,H_i], & 1\leq i \leq n,\\[1mm]
\{E_{1,i}, H_1, H_{i}\}, \ 1\leq i \leq n & \Rightarrow & [H_1,H_1]=-[H_i,H_i], & 1\leq i \leq n.
\end{array}$$

Thus, we obtain $[H_i,H_j]=0, \ \ 1\leq i, j \leq n,$ which complete the proof of theorem.
\end{proof}

\subsection{Leibniz algebras associated with Fock module over Diamond Lie algebra $\mathcal{D}_k$.}

\

Our goal in this subsection consists of extending the notion of Fock module over algebra $\mathcal{D}_k$ and to clarify the structure of Leibniz algebra associated with Fock module over algebra $\mathcal{D}_k$.

First, we introduce notations for the basis $\{X_i,Y_i,T,Z, \ \  i=1, 2, \dots, k\}$ of the algebra $\mathcal{D}_k:$
$$\overline{e}=T, \ \ \overline{1}=Z, \ \ \overline{x}_i=X_i, \ \ \frac{\overline{\delta}}{\delta x_i}=Y_i, \ \ i=1, 2, \dots, k.$$

We define the Fock module over Diamond Lie algebra $\mathcal{D}_k$ as a vector space $\mathbb{C}[x_1,\dots,x_k]$ with the action $(-,-) \ : \  \mathcal{D}_k \times \mathbb{C}[x_1,\dots,x_k] \rightarrow \mathbb{C}[x_1,\dots,x_k]$ in the following way:

\begin{equation}\label{eqfock}
\begin{array}{lll}
(p(x_1,\dots,x_k),\overline{1})&\mapsto
&p(x_1,\dots,x_k),\\[1mm]
(p(x_1,\dots,x_k),\overline{x}_i)&\mapsto
&x_ip(x_1,\dots,x_k),\\[1mm]
(p(x_1,\dots,x_k),\frac{\overline{\delta}}{\delta x_i})&\mapsto
&\frac{\overline{\delta}}{\delta
x_i}(p(x_1,\dots,x_k)),\\[1mm]
(p(x_1,\dots,x_k),\overline{e})&\mapsto
&-x_i\frac{\overline{\delta}}{\delta
x_i}(p(x_1,\dots,x_k)),\end{array}
\end{equation}
for any $p(x_1,\dots,x_k)\in \mathbb{C}[x_1,\dots,x_k]$ and $i=1,\dots,k.$

It is easy to check that this action satisfies the right module structure over algebra $\mathcal{D}_k$ and it is induced from Fock right module over Heisenberg Lie algebra.

\begin{teo}\label{thm1} Any Leibniz algebra $L$ such that $L/I \cong \mathcal{D}_k$ and the ideal $I$ is being a right Fock module $\mathbb{C}[x_1,\dots,x_k]$ over $\mathcal{D}_k$ is isomorphic to the following algebra:

$$\begin{array}{lll}
[x_1^{t_1}x_2^{t_2}\dots x_k^{t_k},\overline{1}]&=&x_1^{t_1}x_2^{t_2}\dots x_k^{t_k},\\[1mm]
[x_1^{t_1}x_2^{t_2}\dots x_k^{t_k},\overline{x}_i]&=&x_1^{t_1}\dots x_{i-1}^{t_{i-1}} x_{i}^{t_{i}+1} x_{i+1}^{t_{i+1}} \dots x_k^{t_k},\\[1mm]
[x_1^{t_1}x_2^{t_2}\dots x_k^{t_k},\frac{\overline{\delta}}{\delta
x_i}]&=& t_ix_1^{t_1}\dots x_{i-1}^{t_{i-1}} x_{i}^{t_{i}-1}x_{i+1}^{t_{i+1}}\dots x_k^{t_k},\\[1mm]
[x_1^{t_1} x_2^{t_2} \dots x_k^{t_k}, \overline{e}]&=
&-t_ix_1^{t_1} \dots x_{i-1}^{t_{i-1}} x_{i}^{t_{i}} x_{i+1}^{t_{i+1}} \dots x_k^{t_k},\end{array}$$
$$\begin{array}{lll}
[\overline{x}_i,\frac{\overline{\delta}}{\delta x_i}]=-[\frac{\overline{\delta}}{\delta x_i},\overline{x}_i]=\overline{1}, & \\[1mm]
[\overline{e},\overline{x}_i]=-[\overline{x}_i,\overline{e}]=\frac{\overline{\delta}}{\delta x_i},&\\[1mm]
[\overline{e},\frac{\overline{\delta}}{\delta x_i}]=-[\frac{\overline{\delta}}{\delta x_i},\overline{e}]=-\overline{x}_i,& \end{array}$$
for $i=1,\dots,k.$
\end{teo}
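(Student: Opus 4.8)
The plan is to realize $L$ as an extension of the fixed data $\overline{L}\cong\mathcal{D}_k$ by the fixed module $I$ and to show that the extension necessarily splits. First I would fix a linear section $\sigma:\mathcal{D}_k\to L$ of the canonical projection $\pi:L\to\mathcal{D}_k$, so that every element of $L$ is written uniquely as $\sigma(\xi)+v$ with $\xi\in\mathcal{D}_k$ and $v\in I$. Since $I$ lies in the right annihilator we have $[\,\cdot\,,I]=0$, and since $I$ is the Fock module the products $[v,\sigma(\xi)]$ for $v\in I$ are prescribed by the Fock action. The only undetermined data are the products of two section elements, which I would record as
\[
[\sigma(\xi),\sigma(\eta)]=\sigma([\xi,\eta])+\theta(\xi,\eta),\qquad \theta(\xi,\eta)\in I,
\]
for a bilinear map $\theta:\mathcal{D}_k\times\mathcal{D}_k\to I$. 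Thus the whole classification reduces to understanding the admissible $\theta$ modulo the freedom of changing the section.

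Second, I would substitute this decomposition into the Leibniz identity $[[a,b],c]=[[a,c],b]+[a,[b,c]]$ and sort by where the arguments lie. Whenever at least one argument lies in $I$, the identity reduces to a statement in the Fock action alone: if $b$ or $c$ lies in $I$ it collapses to $0=0$ because $[\,\cdot\,,I]=0$, while if $a\in I$ and $b,c\in\sigma(\mathcal{D}_k)$ it reproduces exactly the right-module axiom $(v\cdot\xi)\cdot\eta-(v\cdot\eta)\cdot\xi=v\cdot[\xi,\eta]$, which holds since $I$ is assumed to be a module. Hence the only genuine constraint comes from $a,b,c\in\sigma(\mathcal{D}_k)$; after the $\sigma$-parts cancel by the Jacobi identity in $\mathcal{D}_k$, the $I$-valued remainder is the Leibniz $2$-cocycle identity
\[
\theta([\xi,\eta],\zeta)+\theta(\xi,\eta)\cdot\zeta=\theta([\xi,\zeta],\eta)+\theta(\xi,\zeta)\cdot\eta+\theta(\xi,[\eta,\zeta]).
\]
Correspondingly, replacing $\sigma$ by $\sigma+\beta$ for a linear $\beta:\mathcal{D}_k\to I$ changes $\theta$ by the coboundary $(\delta\beta)(\xi,\eta)=\beta(\xi)\cdot\eta-\beta([\xi,\eta])$, so it remains to show that every such $\theta$ is a coboundary.

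Third, and this is the crux, I would exploit the central element $Z=\overline{1}$, which is central in $\mathcal{D}_k$ and, crucially, acts as the identity on the Fock module, $v\cdot\overline{1}=v$. Specializing the cocycle identity at $\zeta=\overline{1}$ and using $[\xi,\overline{1}]=[\eta,\overline{1}]=0$ together with $\theta(\xi,\eta)\cdot\overline{1}=\theta(\xi,\eta)$ collapses it to
\[
\theta(\xi,\eta)=\theta(\xi,\overline{1})\cdot\eta-\theta([\xi,\eta],\overline{1}).
\]
This is exactly the relation needed to remove $\theta$ by a change of section: setting $\sigma'(\xi):=\sigma(\xi)-\theta(\xi,\overline{1})$ replaces $\theta$ by $\theta+\delta\beta$ with $\beta(\xi)=-\theta(\xi,\overline{1})$, and the displayed identity gives $\delta\beta=-\theta$, so the new cocycle vanishes. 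In the basis furnished by $\sigma'$ one has $[\sigma'(\xi),\sigma'(\eta)]=\sigma'([\xi,\eta])$ with no $I$-correction, which together with the prescribed Fock action is precisely the displayed multiplication table; thus $L\cong Q(\mathcal{D}_k)$.

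I expect the main obstacle to be organizational rather than conceptual: one must verify once and for all that the mixed triples contribute nothing beyond the module axioms, so that the problem genuinely concerns $\theta$ alone, and one must pinpoint that it is the invertibility of the central action, here the identity operator $v\mapsto v\cdot\overline{1}$, that forces the second Leibniz cohomology of $\mathcal{D}_k$ with Fock coefficients to vanish. If instead one runs the computation in coordinates, writing each $\theta(\xi,\eta)$ as an unknown polynomial in $\mathbb{C}[x_1,\dots,x_k]$ and testing the Leibniz identities over all basis triples drawn from $\{\overline{x}_i,\tfrac{\overline{\delta}}{\delta x_i},\overline{e},\overline{1}\}$, the difficulty becomes the bookkeeping of infinitely many coefficients; the degree grading of $I$ (on which $\overline{e}$ acts by $-\deg$ and $\overline{1}$ as the identity) lets one solve the resulting relations degree by degree, but the central-element argument above bypasses this entirely.
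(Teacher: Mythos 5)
Your proof is correct and is essentially the paper's argument recast in cohomological language: the paper first changes the basis by $\overline{x}_i\mapsto\overline{x}_i-[\overline{x}_i,\overline{1}]$ etc.\ (your $\beta(\xi)=-\theta(\xi,\overline{1})$) and then applies the Leibniz identity $[[\mathcal{D}_k,\mathcal{D}_k],\overline{1}]=0$ together with the fact that $\overline{1}$ acts as the identity on the Fock module to kill the remaining $I$-components, which is exactly your specialization of the cocycle identity at $\zeta=\overline{1}$. The two key ingredients --- centrality of $Z=\overline{1}$ in $\mathcal{D}_k$ and invertibility of its action on $I$ --- are the same in both write-ups, so the only difference is the order of the two steps and the explicit $2$-cocycle/coboundary bookkeeping.
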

\begin{proof}

Let $L$ be a Leibniz algebra satisfying the condition of theorem. As a basis of the algebra $L$ choose $$\{\overline{e}, \ \overline{1}, \ \overline{x}_i,\ \frac{\overline{\delta}}{\delta x_i}, \ x_1^{t_1}x_2^{t_2}\cdots x_k^{t_k} |\ t_{i}\in \mathbb{N}\cup \{0\}, 1\leq i\leq k\}.$$

Set for $p_i, q_i, r, m\in \mathbb{C}[x_1, \dots, x_k]$ the following
$$\begin{array}{ll}
[\overline{x}_i,\overline{1}]=
p_i(x_1,\dots,x_k),&1\leq i\leq k,\\[1mm]
[\frac{\overline{\delta}}{\delta x_i},\overline{1}]=
q_i(x_1,\dots,x_k),&1\leq i\leq k,\\[1mm]
[\overline{1},\overline{1}]= r(x_1,\dots,x_k),&\\[1mm]
[\overline{e},\overline{1}]= m(x_1,\dots,x_k).&\end{array}$$

Taking the change of basis elements
$$\begin{array}{ll}
\overline{x}_i^\prime=\overline{x}_i-
p_i(x_1,\dots,x_k),&1\leq i\leq k,\\[1mm]
\frac{\overline{\delta}}{\delta
x_i}^\prime=\frac{\overline{\delta}}{\delta x_i}-q_i(x_1,\dots,x_k),&1\leq i\leq k,\\[1mm]
\overline{1}^\prime=\overline{1}- r(x_1,\dots,x_k),&\\[1mm]
\overline{e}^\prime=\overline{e}-m(x_1,\dots,x_k),&\end{array}$$
apply the products generated from \eqref{eqfock}. One can assume
\begin{equation}\label{center}
[\overline{x}_i,\overline{1}]=[\frac{\overline{\delta}}{\delta x_i},\overline{1}]=
[\overline{1},\overline{1}]= [\overline{e},\overline{1}]=0, \ \ \
1\leq i\leq k.
\end{equation}
The products \eqref{center} imply $\overline{1}\in Ann_r(\mathcal{D}_k)$.

From the chain of equalities
$$[[\mathcal{D}_k,\mathcal{D}_k],\overline{1}]=[\mathcal{D}_k,[\mathcal{D}_k,\overline{1}]]+[[\mathcal{D}_k,
\overline{1}],\mathcal{D}_k]=0$$
and the products $[x_1^{t_1}x_2^{t_2}\dots x_k^{t_k},\overline{1}]=x_1^{t_1}x_2^{t_2}\dots x_k^{t_k}$
we conclude
$$\begin{array}{lll}
[\overline{x}_i,\frac{\overline{\delta}}{\delta x_i}]=-[\frac{\overline{\delta}}{\delta x_i},\overline{x}_i]=\overline{1}, & 1\leq i\leq k,\\[1mm]
[\overline{e},\overline{x}_i]=-[\overline{x}_i,\overline{e}]=\frac{\overline{\delta}}{\delta x_i},& 1\leq
i\leq k,\\[1mm]
[\overline{e},\frac{\overline{\delta}}{\delta x_i}]=-[\frac{\overline{\delta}}{\delta x_i},\overline{e}]=-\overline{x}_i,& 1\leq i\leq k.
\end{array}$$
\end{proof}

\section{Acknowledgments}

This work was supported by Ministerio de Econom\'ia y Competitividad (Spain) grant MTM2013-43687-P (European FEDER support included) and by Ministry of Education and Science of the Republic of Kazakhstan the grant No. 0828/GF4.

\end{document}